\renewcommand{\gls}{\textbf}
\newcommand{\chapter}{\section}
\newtheorem{theorem}{Theorem}[section]
\newtheorem{lemma}[theorem]{Lemma}
\newtheorem{proposition}[theorem]{Proposition}
\newtheorem{corollary}[theorem]{Corollary}
    \def\@endtheorem{\hfill $\lozenge$\endtrivlist\@endpefalse }
\theoremstyle{definition}
\newtheorem{example}[theorem]{Example}
\newtheorem{definition}[theorem]{Definition}
\newtheorem{remark}[theorem]{Remark}
\newtheorem{construction}[theorem]{Construction}
\newcommand{\RLP}{{\bf RLP}}
\newcommand{\LLP}{{\bf LLP}}
\newcommand{\cat}[1]{\ensuremath{\mathbf{#1}}}
\newcommand{\Sq}{\mathbb{S}\cat{q}}
\begin{document}

\begin{abstract}
We will make two contributions to the theory of effective Kan fibrations, which are a more explicit version of the notion of a Kan fibration, a notion which plays a fundamental role in simplicial homotopy theory. We will show that simplicial Malcev algebras are effective Kan complexes and that the effective Kan fibrations can be seen as the right class in an algebraic weak factorisation system. In addition, we will introduce two strengthenings of the notion of an effective Kan fibration, the
symmetric effective and degenerate-preferring Kan fibrations, and show that the previous results holds for these strengthenings as well.
\end{abstract}

\title{Examples and cofibrant generation of effective Kan fibrations}

\author[1]{Benno van den Berg}
\affiliation[1]{organization={Universiteit van Amsterdam}, country={The Netherlands}}

\author[2]{Freek Geerligs\fnref{fnFreek}}
\affiliation[2]{organization={Department of Computer Science and Engineering \\
University of Gothenburg and Chalmers University of Technology}, country={Sweden}}
\fntext[fnFreek]{This paper is a summary of 
the Masters' thesis \cite{MyThesis}, which was written while Freek 
was a student at Utrecht University.}

\maketitle

\section{Introduction}
In this paper we make several contributions to the development of simplicial homotopy theory, in particular the Kan-Quillen model structure, in an explicit or constructive style. Our motivation comes from homotopy type theory \cite{hottbook}, where Kan fibrations have been used in \cite{Voevodsky} to model dependent types for homotopy type theory. An essential part of this proof is that Kan fibrations are closed under pushforwards, so we can model dependent products. 

Homotopy type theory in turn models constructive mathematics.  However, in \cite{BCP} (see also \cite{FunctionalKan}), it has been shown that a proof that Kan fibrations 
are closed under pushforwards cannot be made constructively. This problem has been addressed by moving away from simplicial sets,
and instead use cubical sets, as in \cite{CCHM} and \cite{BCH}. However, as simplicial sets are important and pervasive in homotopy theory, 
it is still desirable to have a constructive notion of Kan fibrations in simplicial sets. 

At this point in time, there are two notions of Kan fibrations in simplicial sets which can 
constructively be shown to be closed under pushforwards, namely the effective Kan fibrations from \cite{BergFaber} and the uniform Kan fibrations from \cite{GambinoSattler}. We will not discuss uniform Kan fibration here, but let us note that this notion of fibration is not ``local'', as discussed in Appendix D of \cite{BergFaber}, and every effective Kan fibration is also a uniform Kan fibration, as shown in Corollary 11.2 of \cite{BergFaber}. 

Therefore we will focus on the effective Kan fibrations, which are Kan fibrations equipped with explicit solutions to lifting problems against horns satisfying a certain compatibility condition. The hope is that for these effective Kan fibrations we can obtain constructive and explicit versions of many standard results in simplicial homotopy theory, as was done in \cite{BergFaber} for closure under pushforwards. In this paper we will show that that is indeed the case for two other standard results. 

The first results says that simplicial groups are Kan complexes \cite{Moore}. The proof of this result is constructive, in that the argument constructs explicit lifts against horn inclusions. We will show that these lifts satisfy the additional compatibility requirement making them effective Kan complexes. In fact, we will show that all simplicial Malcev algebras are effective Kan complexes, where Malcev algebras are 
exactly those algebras for which all simplicial objects are Kan complexes, 
as shown in \cite{CKP} and 
\cite{KanFibrationsForMalcevAlgebras}.
We will describe these results in Section \ref{sec:Malcev}.

The second result is that Kan fibrations are the right class in a weak factorisation system. By now there is a well-established explicit version of a weak factorisation system, which goes by the name of an ``algebraic weak factorisation system'' \cite{GarnerSmallObjects,BourkeGarner,Bourke}. In Section \ref{sec:LAWFS} we will show that the effective Kan fibrations are the right class in an algebraic weak factorisation system.

In Section \ref{sec:Definitions} we will also introduce two strengthenings of the notion of an effective Kan fibration, the \text{symmetric effective Kan fibrations} and the 
\text{degenerate-preferring Kan fibrations}, respectively. We will show that our two main results hold for these classes of maps as well.

\section{Notions of Kan fibrations }\label{sec:Definitions}
In this section, we shall recall the necessary mathematical background on Kan fibrations. 
Then we shall introduce the newer notions of Kan fibrations. 
\subsection{Mathematical background on Kan fibrations}
In this subsection, we shall recall some concepts from higher category theory.
We assume some 
familiarity with category theory and topos theory.
References 
include
\cite{kerodon},
\cite{RiehlCat} 
and 
\cite{PTJ}.

For each natural number $n\in\mathbb N$, we define 
the category $[n]$ as the poset $\mathbb N_{\leq n}$, 
ordered under $\leq$. 
Functors between the category $[n]$ and $[m]$ are then order-preserving functions
between $\mathbb N_{\leq n}$ and $\mathbb N_{\leq m}$.
The category with the objects
$\{[n] \, | \, n\in\mathbb N\}$ and morphisms the order-preserving functions is denoted 
$\Delta$ and called the \textbf{simplex category}.
The simplex category has two special classes of maps, called 
the {degeneracy} and {face maps}. 
\begin{itemize}
        \item 
                For any $n\in\mathbb N$, $0\leq i\leq n$, we have a 
                \textbf{degeneracy map} ${s_i}$
                hitting $i$ twice. 
                \begin{equation}
                        s_i : [n+1]\to [n], \hspace{1cm}
                        s_i(k) = \begin{cases}
                        k \text{ if } k \leq i \\
                        k-1 \text{ if } k > i  \end{cases}
                \end{equation}

        \item 
                For any $n\in\mathbb N$, $0\leq i \leq n$, we have a 
                \textbf{face map} ${d_i}$
                skipping over $i$. 
                \begin{equation}
                d_i : [n]\to [n+1], \hspace{1cm}
                d_i(k) = \begin{cases}
                k \text{ if } k < i \\
                k+1 \text{ if } k \geq i
                \end{cases}
                \end{equation}
\end{itemize}
Every map in $\Delta$ can be written 
as a composition of face and degeneracy maps. 
Between the face and degeneracy maps, we have the following composition laws, 
called the \textbf{simplicial identities}:
\begin{align}
        s_j\circ d_k &= \begin{cases}
                d_{k-1}\circ s_j &\text{ if } k>j+1 \\
                1                &\text{ if } k\in\{j,j+1\}\\
                d_k\circ s_{j-1} &\text{ if } k<j \\
        \end{cases} \\
        d_j\circ d_k &= d_{k+1}\circ d_j    \text{ if } k \geq j \\
        s_j\circ s_k &= s_{k} \circ s_{j+1} \text { if } j\geq k
\end{align}

A \textbf{simplicial set} is 
a presheaf on $\Delta$. 
More generally, for any category $\mathcal C$, a simplicial object of 
$\mathcal C$ is a functor  $\Delta^{op} \to \mathcal C$. 
The category of simplicial sets is denoted \textbf{sSet}.
The representable presheaf corresponding to $[n]$ is 
denoted ${\Delta^n}$ and called the $n$-simplex. 
For a simplicial set $G$, the Yoneda lemma allows us to identify maps $\Delta^n\to G$
with elements of $G_n$.

As $\textbf{sSet}$ is a presheaf category, 
it is a topos and the subobjects
of any simplicial set carry a lattice structure. 
For any two subobjects $A\hookrightarrow C, B\hookrightarrow C$, 
the intersection $A\cap B$
is given by taking the pullback of the two arrows, while the union $A\cup B$ is given 
by taking the pushout of the arrows 
$A\cap B \hookrightarrow  A$ and $A\cap B \hookrightarrow B$. 
For all $n>0$ and each $0\leq m \leq n$, we define the following subobject of $\Delta^n$:
\begin{equation}
        \Lambda^n_m = \bigcup_{\substack{0\leq k \leq n\\ k \neq m }} d_k
\end{equation}
$\Lambda^n_m$ is called a \textbf{horn}, and the mono 
$\iota: \Lambda^n_m \hookrightarrow \Delta^n$ is called a \textbf{horn inclusion}.
We have abused notation by writing $d_k$
for the subobject of $\Delta^{n-1}\hookrightarrow \Delta^n$ corresponding to the mono 
$d_k:[n-1]\hookrightarrow [n]$.
We will also use $d_k$ for the map $\Delta^{n-1}\hookrightarrow \Lambda^n_m$ if $k\neq m$.

A map $\alpha:X\to Y$ of simplicial sets is a \textbf{Kan fibration} if 
for any horn inclusion $\iota:\Lambda^n_m \hookrightarrow \Delta^n$
and any pair of maps $x,y$ making the solid part of the following diagram commute, 
there exists a dashed map, called a \textbf{lift}, making the entire diagram commute. 
        \begin{equation}\label{eqn:dfnKanFib}\begin{tikzcd}
                \Lambda^n_m \arrow[d,hook,"\iota"'] \arrow[r,"x"] & X \arrow[d,"\alpha"] \\
                \Delta^n \arrow[r, "y"] \arrow[ru,dashed] & Y
        \end{tikzcd}\end{equation}
A simplicial set $X$ is called a \textbf{Kan complex} 
if the unique map to the terminal object $X\to \Delta^0$ is a Kan fibration.
We will use the same vocabulary for the other notions of Kan fibration introduced 
in this paper.

\subsection{Degenerate-preferring Kan fibrations}
In this subsection, we shall introduce the degenerate-preferring Kan fibrations, which are examples of functional Kan fibrations.

\begin{definition}\label{dfnFuncKanFib}
        A \textbf{functional Kan fibration} is a morphism of 
        simplicial sets $\alpha:X\to Y$, 
        together with a function $\text{lift}$, which takes as input maps 
        $x,y$ making the solid diagram in Diagram \ref{eqn:dfnKanFib} commute, 
        and outputs a morphism $\text{lift}(x,y):\Delta^n\to X$  
        making the diagram commute. 
\end{definition}

The definition of the degenerate-preferring Kan fibrations can be motivated by the following result of Van den Berg, Faber and Sattler. In Appendix C of \cite{BergFaber} they show that
if the lifting problem 
in Diagram \ref{eqn:dfnKanFib} has a degenerate solution, it is unique:
\begin{lemma}
        Suppose that both $z\circ s_j$ and $z'\circ s_{j'}$ are possible lifts 
        for lifting problems as in Diagram \ref{eqn:dfnKanFib}. 
        Then $z\circ s_j = z'\circ s_{j'}$.
\end{lemma}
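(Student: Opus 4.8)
The plan is to reduce to a purely structural statement about $X$ and then attack it with the Eilenberg--Zilber lemma. First note that both candidate lifts restrict to the same attaching map $x$ on $\Lambda^n_m$, and that this is the only part of the hypothesis I will use; indeed I will prove the sharper claim that any two degenerate $n$-simplices of $X$ with the same restriction to a horn $\Lambda^n_m$ are equal. Write $\sigma := z\circ s_j$ and $\sigma' := z'\circ s_{j'}$, regarded as degenerate $n$-simplices of $X$. Since $\Lambda^n_m$ contains the $i$-th face $\Delta^{n-1}\hookrightarrow\Delta^n$ for every $i\neq m$, the hypothesis is exactly that $d_i\sigma = d_i\sigma'$ for all $i\in\{0,\dots,n\}\setminus\{m\}$. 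By the Eilenberg--Zilber lemma write $\sigma = \eta^{*}\tau$ and $\sigma' = (\eta')^{*}\tau'$ with $\eta\colon[n]\twoheadrightarrow[p]$, $\eta'\colon[n]\twoheadrightarrow[p']$ order-preserving surjections and $\tau\in X_p$, $\tau'\in X_{p'}$ non-degenerate; since $\sigma,\sigma'$ are degenerate we have $p,p'<n$, and it suffices to prove $p=p'$, $\tau=\tau'$ and $\eta=\eta'$.

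The combinatorial engine is the following. For any face index $i$ we have $d_i\sigma = (\eta\circ d_i)^{*}\tau$, and $\eta\circ d_i\colon[n-1]\to[p]$ is surjective precisely when $i$ lies in a fibre $\eta^{-1}(q)$ of size at least $2$; when it is surjective, $(\eta\circ d_i)^{*}\tau$ is already an Eilenberg--Zilber decomposition, so the non-degenerate part of $d_i\sigma$ equals $\tau$ and has dimension $p$, while otherwise the non-degenerate part has dimension $<p$. Let $S$ be the set of indices $i$ for which $\eta\circ d_i$ is surjective. If $r$ is the number of fibres of $\eta$ of size $\geq 2$, then $|S| = (n-p)+r$, so $|S|\geq (n-p)+1\geq 2$, and $|S|=2$ forces $n-p=1$, in which case $\eta = s_k$ is a single degeneracy and $S=\{k,k+1\}$. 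Let $S'$ be defined analogously from $\eta'$, with $|S'|\geq 2$.

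Now pick $i_0\in S\setminus\{m\}$ (non-empty since $|S|\geq 2$); comparing dimensions of the non-degenerate parts on the two sides of $d_{i_0}\sigma = d_{i_0}\sigma'$ gives $p\leq p'$, and symmetrically $p'\leq p$, so $p=p'$; the same comparison at every $i\in S\setminus\{m\}$ then yields $S\setminus\{m\}=S'\setminus\{m\}$, and for $i_0$ in this common set the two presentations $d_{i_0}\sigma = (\eta\circ d_{i_0})^{*}\tau$ and $d_{i_0}\sigma' = (\eta'\circ d_{i_0})^{*}\tau'$ are Eilenberg--Zilber decompositions of one and the same simplex, so $\tau=\tau'$ and $\eta\circ d_{i_0}=\eta'\circ d_{i_0}$ by uniqueness. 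If $|S\setminus\{m\}|\geq 2$, choose two distinct such indices $i_1,i_2$: then $\eta$ and $\eta'$ agree on $([n]\setminus\{i_1\})\cup([n]\setminus\{i_2\})=[n]$, hence $\eta=\eta'$ and $\sigma=\sigma'$. The only remaining case is $|S|=2$ with $m\in S$, where $n-p=1$, so $\eta=s_k$, $\eta'=s_{k'}$, and $S\setminus\{m\}=S'\setminus\{m\}$ together with $|S|=|S'|=2$ forces $S=S'$, hence $\{k,k+1\}=\{k',k'+1\}$, hence $k=k'$ and $\sigma=\sigma'$. I expect the whole difficulty to be this bookkeeping rather than any isolated hard step --- the subtle point is controlling how the single face $d_m$ omitted by the horn meshes with the degeneracy structure, which is exactly what makes the borderline case $|S|=2$, $m\in S$ need its own treatment. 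One can also avoid the Eilenberg--Zilber lemma altogether and instead induct on $|j-j'|$, repeatedly rewriting the faces $d_i(z\circ s_j)$ and $d_i(z'\circ s_{j'})$ via the simplicial identities so as to force $z$ and $z'$ to become progressively more degenerate until $j$ and $j'$ coincide; that route is correct but combinatorially heavier, so I would present the Eilenberg--Zilber argument.
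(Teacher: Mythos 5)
The paper does not actually prove this lemma; it cites Appendix~C of \cite{BergFaber} for it, so there is no in-paper argument to compare against. Judged on its own, your Eilenberg--Zilber argument is correct, and you have correctly identified that the essential content is the purely combinatorial statement that two degenerate $n$-simplices agreeing on all faces except possibly the $m$-th must coincide. Your bookkeeping is sound: writing $\sigma=\eta^{*}\tau$, the set $S$ of indices $i$ with $\eta\circ d_i$ surjective indeed has size $(n-p)+r\geq 2$, so $S\setminus\{m\}$ is nonempty; the dimension comparison gives $p=p'$ and $S\setminus\{m\}=S'\setminus\{m\}$; uniqueness of the EZ decomposition at a common index gives $\tau=\tau'$ and $\eta d_{i_0}=\eta' d_{i_0}$; and the split into the cases $|S\setminus\{m\}|\geq 2$ versus $|S|=2$, $m\in S$ is exhaustive and both branches close correctly (in the borderline case you recover $\eta=\eta'=s_k$ from $S=S'$ together with $n-p=1$).

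One caveat worth noting given the paper's overall constructive agenda: the Eilenberg--Zilber decomposition of $\sigma=z\circ s_j$ requires deciding whether $z$ is itself degenerate, which is not available for an arbitrary simplicial set. This is harmless for the lemma's actual role in the paper --- it motivates the definition of degenerate-preferring fibrations, which the paper anyway sets up with the law of excluded middle and choice --- but it means your EZ route is not a constructive proof. The alternative you sketch at the end, manipulating simplicial identities directly to compare $z\circ s_j$ and $z'\circ s_{j'}$ without passing to non-degenerate cores, would avoid that decidability demand and is more in the spirit of the explicit, identity-chasing arguments used elsewhere in the paper (e.g.\ in the Malcev section); if you wanted a version usable inside the constructive parts of the development, that is the route to take.
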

Using the law of excluded middle and the axiom of choice, any Kan 
fibration can be given the structure of a functional Kan fibration
satisfying the following definition:
\begin{definition}
        A \textbf{degenerate-preferring Kan fibration} is functional Kan fibration 
        for which we have that 
        $\text{lift}$ picks out a solution of the form 
        $z\circ s_j$ whenever this is possible. 
\end{definition}

\begin{remark}
  Evan Cavallo has pointed out to us that degenerate-preferring Kan fibrations are similar to 
  the ``regular" or ``normal" Kan fibrations in cubical sets 
  (see, for example, the introduction of \cite{swanseparating} or \cite[Section 7]{gambinolarrea23}).
  Thierry Coquand based this name on the 
  regular fiber spaces, which first appeared in \cite{HurewiczFiber}.
\end{remark}

  \subsection{Symmetric effective Kan fibrations}
  \label{sec:SymEffKanFib}
In this section, we shall define symmetric effective Kan fibrations. 
This notion is based on the definition of effective Kan fibrations, 
which we shall discuss in Section \ref{sec:EffKanCom}.

Effective Kan fibrations are functional Kan fibrations as in Definition 
\ref{dfnFuncKanFib} where the function $\text{lift}$ satisfies certain conditions. 
Inspired by Theorem 12.1 from \cite{BergFaber}, 
these conditions amount to 
stability along pullback along degeneracy maps.
We will first give an example of what this means for a specific
lifting problem, and then give the general definition. 
\begin{example}
Consider in Equation \ref{eqn:dfnKanFib} the case where 
$\iota:\Lambda^2_2\hookrightarrow \Delta^2$, 
and suppose that $Y=\Delta^0$, the terminal object in $\gls{sSet}$.
Then $y,\alpha$ are uniquely determined.
For $x:\Lambda^2_2\to X$, we draw 
$x\circ d_0$ in green, $x\circ d_1$ in blue and $x\circ d_1\circ d_1$ in red. 
\begin{equation}
\begin{tikzpicture}
\node (0) at (0,0) {\textcolor{red}{$0$}};
\node (1) at (2,0) {{$1$}};
\node (2) at (1,-1) {{$2$}};
\draw[->,blue] (0)--(2);
\draw[->,green] (1)--(2);
\end{tikzpicture}
\end{equation}

Assume now that $\alpha:X\to 1$ has the structure of a functional Kan fibration, 
so $X$ has the structure of a functional Kan complex. 
Then there exists an extension of $x$ 
to a map $\text{lift}(x,y):\Delta^2\to X$,
which we draw as follows:
\begin{equation}
\begin{tikzpicture}
\shade[left color = {blue!30}, right color = {green!30}, opacity = 0.5 ] (2,0)--(1,-1)--(0,0)--cycle;
\node (0) at (0,0) {\textcolor{red}{$0$}};
\node (1) at (2,0) {{$1$}};
\node (2) at (1,-1) {{$2$}};
\draw[->,blue] (0)--(2);
\draw[->,green] (1)--(2);
\end{tikzpicture}
\end{equation}

Suppose that we pull back 
$\iota$ along $s_0:\Delta^3\to \Delta^2$ and get 
a subobject $s_0^*(\Lambda^2_2)\hookrightarrow \Delta^3$. 
For $\sigma$ the map $s_0^*(\Lambda^2_2)\to \Lambda^2_2$, 
we want to draw $x\circ \sigma$.

Note that $s_0$ sends $0,1$ both to $0$, 
	sends $2$ to $1$ and $3$ to $2$. 
This gives that the pullback of $0$ is the arrow $0\to 1$,
the pullback of $1$ is $2$ and the pullback of 
$2$ is $3$.
Geometrically, we can interpret pulling back along $s_0$ 
as taking the red point
and stretching it out to a red line. We thus draw $x\circ\sigma$ as follows:
\begin{equation}
\begin{tikzpicture}
\fill[blue!10] (0,0)--(2,0)--(3,-1)--cycle;
\node (0) at (0,0) {\textcolor{red}{$0$}};
\node (1) at (2,0) {\textcolor{red}{$1$}};
\node (2) at (1,-1) {{$2$}};
\node (3) at (3,-1) {{$3$}};
\draw[red,->] (0)--(1);
\draw[blue, ->] (0)--(3);
\draw[blue, ->] (1)--(3);
\draw[green, ->] (2)--(3);
\end{tikzpicture}
\end{equation}

Note that when we stretch out the red point, 
we also stretch out a face of the blue arrow. 
As a consequence, the entire blue arrow should be stretched out along that face as well. 
This corresponds to stating that $x\circ\sigma\circ d_2$ should be equal to $x\circ d_1\circ s_0$.

Now suppose we want to extend $x\circ\sigma$ to a map $\Delta^3\to X$. 
Note that the faces $d_0$ and $d_1$ of $x\circ\sigma$ look exactly like $x$, 
as their boundary contains 
the original horn (the blue and green arrows). 
We can thus extend $x\circ\sigma$ to a map $s_0^*(x):\Lambda^3_3\to X$ by 
putting $s_0^*(x)\circ d_0=s_0^*(x)\circ d_1 = \text{lift}(x,y)$.
We can draw $s_0^*(x)$ as follows:
\begin{equation}
\begin{tikzpicture}
\fill[blue!10] (0,0)--(2,0)--(3,-1)--cycle;
\shade[left color = {blue!30}, right color = {green!30}, opacity = 0.5 ] (2,0)--(1,-1)--(3,-1)--cycle;
\shade[left color = {blue!30}, right color = {green!30}, opacity = 0.5 ] (0,0)--(1,-1)--(3,-1)--cycle;
\node (0) at (0,0) {\textcolor{red}{$0$}};
\node (1) at (2,0) {\textcolor{red}{$1$}};
\node (2) at (1,-1) {{$2$}};
\node (3) at (3,-1) {{$3$}};
\draw[red] (0)--(1);
\draw[blue, ->] (0)--(3);
\draw[blue, ->] (1)--(3);
\draw[green, ->] (2)--(3);
\end{tikzpicture}
\end{equation}

As $\alpha$ is a functional Kan fibration, 
it should have an extension for $s_0^*(x)$ to a map $\Delta^3\to X$. 
Note that $\text{lift}(x,y)\circ s_0$ should be a possible extension. 
The condition for $\alpha$ being an symmetric effective Kan fibration 
will be that we 
choose exactly $\text{lift}(x,y)\circ s_0$ 
as lift against $s_0^*(x)$. 

\end{example}
For the general case, consider 
Diagram \ref{eqn:dfnKanFib}, 
and suppose we pull back $\iota$ along $s_j:\Delta^{n+1}\to \Delta^n$:
\begin{equation}
\begin{tikzcd}
	\arrow["\lrcorner"{anchor=center, pos=0.125}, draw=none, rd]
s_{j}^*(\Lambda^n_m) \arrow[r,"\sigma_{j}"]\arrow[d,hook] & 
        \Lambda^n_m \arrow[d,hook,"\iota"] \arrow[r,"x"] & X\arrow[d,"\alpha"]\\
\Delta^{n+1} \arrow[r,"s_{j}"] 
        & \Delta^n \arrow[r,"y"] \arrow[ru,dashed]& Y
\end{tikzcd}
\end{equation}

We shall argue that the inclusion 
$s_{j}^*(\Lambda^n_m)\hookrightarrow \Delta^{n+1}$ 
factors through a horn inclusion 
$\iota^*:\Lambda^{n+1}_{m^*} \hookrightarrow \Delta^{n+1}$. 
We shall then show that a dashed arrow as above allows us
to extend $x\circ\sigma_j$
to a map 
$s_j^*(x):\Lambda^{n+1}_{m^*} \to X$. 

By the pullback property, we have that $d_k$ factors through $s_{j}^*(\Lambda^n_m)$ iff
$s_{j}\circ d_k$ factors through $\Lambda^n_m$. 
\begin{itemize}
        \item
As $s_{j}\circ d_{j} = s_{j} \circ d_{j+1}=id$,
the faces
$d_{j}$ and $d_{j+1}$ do not factor through $s_j^*(\Lambda^n_m)$. 

\item 
If $j\neq m$, we have for 
    \begin{equation}\label{eqnm*}
	(j^*,m^*) = 
	\begin{cases}
		(j-1,m) \text { if }   m < j  \\
		(j,m+1) \text{ if }  m > j
	\end{cases}
\end{equation}
that
$s_{j}\circ d_{m^*}=d_m\circ s_{j^*}$. 
Now $d_m\circ s_{j^*}$ does not 
factor through $\Lambda^n_m$. 
So if $j\neq m$, we have that $d_{m^*}$ does not factor through $s_j^*(\Lambda^n_m)$. 

\item 
For all other faces $d_k$, we have that $s_{j}\circ d_k = d_{k'}\circ s_{{j}'}$
for some $k'\neq m$ and some $j'$, hence those $d_k$ do factor through $s_{j}(\Lambda^n_m)$. 
\end{itemize}

While $d_{j}, d_{j+1}$ do not factor through $s_{j}^*(\Lambda^n_m)$, 
the faces do intersect $s_{j}^*(\Lambda^n_m)$. 
And we do know what this intersection looks like.
If we use $d_{j_\pm}$ as shorthand for both $d_j$ and $d_{j+1}$, we can see that 
$s_j\circ d_{j_\pm} = id$. 
Since taking intersection with $d_{j_\pm}$ corresponds to 
taking a pullback along $d_{j_\pm}$,
and the pullback of the identity is always the identity, we get the following
diagram:

\begin{equation}
\begin{tikzcd}
\Lambda^n_m \arrow[r,dotted] \arrow[d,hook]
\arrow["\lrcorner"{anchor=center, pos=0.125}, draw=none, rd]
\arrow[rr, bend left = 40, "id"]
& s_j^*(\Lambda^n_m) \arrow[d,hook] 
\arrow["\lrcorner"{anchor=center, pos=0.125}, draw=none, rd]
\arrow[r,"\sigma_{j}"]
& \Lambda^n_m 
        \arrow[r,"x"] 
        \arrow[d,hook] 
        & X\arrow[d,"\alpha"]
\\
\Delta^n\arrow[r,"d_{j_\pm}"] 
\arrow[rr,bend right = 30, "id"']
& \Delta^{n+1} 
        \arrow[r,"s_{j}"]  & \Delta^n \arrow[r,"y"]& Y
\end{tikzcd}
\end{equation}

If there exists a map $\text{lift}(x,y):\Delta^n\to X$ making the right-most square 
above commute, 
we could extend $x\circ\sigma$ to a map $s_j^*(\Lambda^n_m)\cup d_{j_\pm} \to X$. 
which has $\text{lift}(x,y)$ as value on the face $d_{j_\pm}$. 
As the intersection of $d_{j_\pm}$ and $s_j^*(\Lambda^n_m)$ is given by $\Lambda^n_m$, 
where $\text{lift}(x,y)$ equals $x$, this is properly defined. 
This allows us to extend $x\circ\sigma$ to 
all of $\Lambda^{n+1}_{m^*}$.
\begin{itemize}
        \item
If $m\neq j$, then we missed three faces in $s_{j}^*(\Lambda^n_m)$, and if we 
add both the faces $d_{j},d_{j+1}$ as above, we get a new horn 
                $\Lambda^{n+1}_{m^*}$, and an extension $s_j^*(x):\Lambda^{n+1}_{m^*}\to X$ 
                of $X$. 
\item
If $m=j$, then we only miss two faces, namely $m,m+1$, 
and we need only add one face to get a new horn.
So we get two possible new horns, 
one where we add $d_m$, and one where we add $d_{m+1}$.
                We thus also have two possible extensions $s_j^*(x)$. 
\end{itemize}
So to summarize, if we have a dashed arrow $\text{lift}(x,y)$ in 
Diagram \ref{eqn:dfnKanFib}, for all $m^*$ with 
\begin{equation}\label{eqn:possibleNewMissingFaces}
	m^*\in \begin{cases}
		\{m\} \text { if } m<j\\
		\{m,m+1\} \text{ if } m =j\\
		\{m+1\} \text{ if }  m > j
	\end{cases}
\end{equation}
we define a map $s_{j}^*(x): \Lambda^{n+1}_{m^*}\to X$ 
with the following values on its faces: 
\begin{equation}\label{eqn:pulledbackHornmap}
s_{j}^*(x)\circ d_k = 
\begin{cases}
x\circ s_{j}\circ d_k\text { if } k\neq j,j+1,m^*\\
\text{lift} (x,y) \text{ if } k \in \{j,j+1\}- \{m^*\}
\end{cases}
\end{equation}
\begin{definition}
	\label{dfn:SymEffKanCom}
        A \gls{symmetric effective Kan fibration} 
        is a functional Kan fibration $(\alpha,\text{lift})$ such that 
        for any
	$0\leq j \leq n$ and any $m^*$ and $s_{j}^*(x)$ as described above, we have 
  \begin{equation}\label{eqndfnsymeffKancom}
	\text{lift}(s_{j}^*(x),y\circ s_j) = \text{lift}(x,y)\circ s_{j}.
  \end{equation}
\end{definition}

\begin{remark}
        There is another formulation of $s_j^*(x)$ suggested by Storm Diephuis:
        \begin{equation}\label{eqnStormsForm}
                s_j^*(x) = \text{lift}(x,y)\circ s_j\circ \iota^*
        \end{equation}
        Here $\iota^*$ is the inclusion
        $\Lambda^{n+1}_{m^*}\hookrightarrow \Delta^{n+1}$.
\end{remark}
\begin{remark}
        Note that $\text{lift}(x,y)\circ s_j$ is always a possible lift for 
        the lifting problem given by $(s_j^*(x),y\circ s_j)$. 
        As this lift is degenerate, a degenerate-preferring Kan fibration will 
        always choose it. Therefore any degenerate-preferring Kan fibration
        is a symmetric effective Kan fibration. 

        However, the converse is not true.
        Let $(X,\text{lift})$ be a degenerate-preferring Kan complex
        with a non-degenerate $1$-cell $A\to B$ in $X$.
        Let $a:\Delta^0\to X$ pick out $A$. 
        For $e$ the isomorphism $\Lambda^1_1\simeq \Delta^0$ consider
        $a\circ e$ as lifting problem against $\Lambda^1_1\hookrightarrow \Delta^1$.
        As $\text{lift}$ is degenerate-preferring, it 
        must pick out $a\circ s_0$ as lift. 
        Note that $a\circ e$ cannot be of the form $s_j^*(x)$:
        the domain of $x$ would have to be a $0$-horn, which does not exist. 
        So if we define $\text{lift}'$ by picking out $A\to B$ as lift for $a\circ e$ 
        and acting the same as $\text{lift}$ on all other lifting problems, 
        $(X,\text{lift}')$ will be a symmetric effective Kan complex, 
        but not degenerate-preferring. 
\end{remark}

\subsection{Effective Kan fibrations}\label{sec:EffKanCom}
        In this section we recall the definition of an 
        effective Kan fibrations. The definition we give here is based on Theorem 12.1 in \cite{BergFaber}. 
        We give a slightly different presentation here which is more in line with the
        rest of this paper. 
\begin{definition}
        A \textbf{signed horn inclusion} $(\iota,\pm)$ is a horn inclusion 
$\iota:\Lambda^n_m\hookrightarrow \Delta^n$, 
together with a sign $\pm\in \{+,-\}$,
such that 
if $m=0$, then $\pm = -$, and if $m=n$, then $\pm = +$.
\end{definition}

\begin{definition}
A \textbf{functional signed Kan fibration} is a morphism of 
        simplicial sets $\alpha:X\to Y$, 
        together with two functions $\text{lift}_+$ and $\text{lift}_-$.
        For $\pm\in\{+,-\}$, $\text{lift}_\pm$
        takes as input a signed horn inclusion $(\iota,\pm)$
        and a pair of maps 
        $x:\Lambda^n_m\to X, y:\Delta^n\to Y$ 
        as in Diagram \ref{eqn:dfnKanFib}.
        and outputs a morphism $\text{lift}_{\pm}(x,y):\Delta^n\to X$ 
        making the diagram commute. 
\end{definition}

\begin{definition}\label{dfnEffKanFib}
        An \textbf{effective Kan fibration} is functional signed Kan fibration 
        such that
        \begin{equation}\label{eqndfnEffKanFib}
	\text{lift}_\pm(
                \text{lift}_\pm(x,y)\circ s_j\circ \iota^*,y\circ s_j
                ) = 
                \text{lift}_\pm(x,y)\circ s_{j}
        \end{equation}
        for any $\iota,x,y$ as in Diagram \ref{eqn:dfnKanFib}, 
        any $\pm \in \{+,-\}$, any $0\leq j \leq n$, 
        any $m^*$ as in Equation \ref{eqn:possibleNewMissingFaces}, 
        and for $\iota^*$ the inclusion 
        $\Lambda^{n+1}_{m^*}\hookrightarrow \Delta^{n+1}$, 
\end{definition}
\begin{remark}
        Recall that the horn $\Lambda^n_m$ is called an inner horn if $m\notin \{0,n\}$, 
        and an outer horn if $m\in \{0,n\}$. 
        Effective Kan fibrations thus have two choices of lifts on inner horns, 
        and one choice of lifts on outer horns. 

        Note that if $\text{lift}_+$ and $\text{lift}_-$ agree on inner horns 
        for some effective Kan fibration, 
        they uniquely determine the structure of a symmetric effective Kan fibration. 
        Conversely, every symmetric effective Kan fibration can be used to 
        create an effective Kan fibration such that $\text{lift}_+$ and 
        $\text{lift}_-$ agree on inner horns. 
\end{remark}

\begin{remark}
The argument in Corollary 12.2 of \cite{BergFaber} 
shows that degenerate-preferring Kan fibrations must be 
effective Kan fibrations. In Section \ref{sec:Malcev}, we shall show constructively 
that a large class of Kan fibrations
can constructively be given the structure of degenerate-preferring Kan fibrations. 
Therefore, they are also effective Kan fibrations. 
\end{remark}

\section{Simplicial Malcev algebras}\label{sec:Malcev}
A well-known result of Moore \cite[Theorem 3.4]{Moore} says  that simplicial groups are Kan complexes. 
Since then, this result has been generalized.
In \cite{CKP} it was shown that we can classify all algebras for which the 
simplicial objects are Kan complexes, namely the Malcev algebras.
In addition, in \cite{KanFibrationsForMalcevAlgebras} it is shown that any 
surjection of simplicial Malcev algebras is a Kan fibration. 

In this section, we shall generalize the latter result to degenerate-preferring Kan fibrations. 
First we shall give some context for simplicial Malcev algebras and introduce 
some new terminology. 

\subsection{Malcev algebras}\label{sec:MalcevMathContext}
Recall that an algebraic theory is a theory $\mathcal T$ where the axioms contain only 
equalities between terms. 
A simplicial algebra for $\mathcal T$ is a functor $\Delta^{op}\to\mathbb T$, 
where $\mathbb T$ is the category of models for $\mathcal T$.

The theory of Malcev algebras is such an algebraic theory.
It is written in the language with a ternary operation $\mu$ 
and has the axioms that $\mu(x,x,y)=y$ and $\mu(x,y,y)=x$. 
Such $\mu$ is called a \textbf{Malcev operation}. 
If an algebraic theory has a Malcev operation, it is called a \textbf{Malcev theory}. 
\begin{example}\label{exampleGroupIsMalcev}
The theory of groups is a Malcev theory with $\mu(x,y,z)=xy^{-1}z$. 
  The theory of Heyting algebras is a Malcev theory with 
$    \mu(x,y,z)=((z\to y)\to x)\wedge ((x\to y)\to z).$
\end{example}
In \cite[Proposition 1]{KanFibrationsForMalcevAlgebras} a concise 
and constructive proof
is given that implies the following:
\begin{proposition}\label{thm:AllSimpObjectsKanImpliesMalcevTheory}
        Let $\mathcal T$ be any algebraic theory. 
        If all simplicial algebras of $\mathcal T$ are Kan complexes,
        then $\mathcal T$ is a Malcev theory. 
\end{proposition}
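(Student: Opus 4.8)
The plan is to extract a Malcev operation for $\mathcal{T}$ directly from a single horn filler in a free simplicial $\mathcal{T}$-algebra, using the fact that free algebras satisfy exactly the equations derivable from the theory.

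First I would recall that the free simplicial $\mathcal{T}$-algebra functor $F$, left adjoint to the forgetful functor to $\mathbf{sSet}$, is computed levelwise: $F(K)_{n}=F_{\mathcal{T}}(K_{n})$, with simplicial operators induced functorially from those of $K$ (this holds because the free $\mathcal{T}$-algebra functor on sets is a left adjoint). I will use that a homomorphism out of $F_{\mathcal{T}}(V)$ is the same thing as a function out of $V$, and hence that an equation between terms holds in $F_{\mathcal{T}}(V)$, where $V$ is the set of variables involved, if and only if it is derivable from the axioms of $\mathcal{T}$. Now set $X:=F(\Lambda^{2}_{1})$; by hypothesis its underlying simplicial set is a Kan complex. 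Write $u=[01]$ and $v=[12]$ for the two non-degenerate edges of $\Lambda^{2}_{1}$, and $0,1,2$ for its vertices. The unit of the adjunction gives a map of simplicial sets $\Lambda^{2}_{1}\to X$, i.e.\ a $\Lambda^{2}_{1}$-horn in $X$; applying the Kan condition produces $\sigma\in X_{2}$ with $d_{2}\sigma=u$ and $d_{0}\sigma=v$ (identifying the generators with their images).

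Next I would analyse $w:=d_{1}\sigma\in X_{1}$. The $1$-simplices of $\Lambda^{2}_{1}$ are exactly $u,v,s_{0}0,s_{0}1,s_{0}2$, so $X_{1}=F_{\mathcal{T}}(\{u,v,s_{0}0,s_{0}1,s_{0}2\})$ and $w$ is represented by a term $\bar W$ in these five variables. The homomorphisms $d_{0},d_{1}\colon X_{1}\to X_{0}=F_{\mathcal{T}}(\{0,1,2\})$ are induced by the functions sending $(u,v,s_{0}0,s_{0}1,s_{0}2)$ to $(1,2,0,1,2)$ and to $(0,1,0,1,2)$ respectively, so $d_{0}w=\bar W(1,2,0,1,2)$ and $d_{1}w=\bar W(0,1,0,1,2)$. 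On the other hand, using the simplicial identities $d_{1}d_{1}=d_{1}d_{2}$ and $d_{0}d_{1}=d_{0}d_{0}$ together with $d_{2}\sigma=u$, $d_{0}\sigma=v$, one computes $d_{1}w=d_{1}u=0$ and $d_{0}w=d_{0}v=2$ in $X_{0}$. Comparing the two computations gives $\bar W(0,1,0,1,2)=0$ and $\bar W(1,2,0,1,2)=2$ in $F_{\mathcal{T}}(\{0,1,2\})$; equivalently, $\mathcal{T}$ derives $\bar W(p,q,p,q,r)=p$ and $\bar W(p,q,r,p,q)=q$.

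Finally I would set $\mu(x,y,z):=\bar W(x,z,x,y,z)$. Instantiating the first derived identity at $(p,q,r)=(x,y,y)$ yields $\mu(x,y,y)=\bar W(x,y,x,y,y)=x$, and instantiating the second at $(p,q,r)=(x,y,x)$ yields $\mu(x,x,y)=\bar W(x,y,x,x,y)=y$. Hence $\mu$ is a Malcev operation definable in $\mathcal{T}$, so $\mathcal{T}$ is a Malcev theory. The argument is constructive (it produces $\mu$ explicitly from one filler) and in fact only uses that $F(\Lambda^{2}_{1})$ satisfies the inner Kan condition. The two steps I expect to demand the most care are the double bookkeeping of the face maps on $w$ — computing $d_{0}w,d_{1}w$ both via the simplicial identities applied to $\sigma$ and via the action of $d_{0},d_{1}$ on the five free generators of $X_{1}$, and making sure the resulting equations have the right pattern of repeated arguments — and the purely syntactic task of specialising the five-ary term $\bar W$ to a genuine ternary Malcev term; the substitution $\mu(x,y,z):=\bar W(x,z,x,y,z)$ works, but recognising it is the one non-mechanical choice in the proof.
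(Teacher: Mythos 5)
Your argument is correct, and it is essentially the standard proof: extract the Malcev term from a filler of the $\Lambda^2_1$ horn in the free simplicial algebra $F(\Lambda^2_1)$, then read off the Malcev identities by comparing the two evaluations of $d_1\sigma$. Note that the paper itself gives no proof of Proposition~\ref{thm:AllSimpObjectsKanImpliesMalcevTheory}; it cites Proposition~1 of \cite{KanFibrationsForMalcevAlgebras}, whose argument is exactly this one. Your bookkeeping checks out: with $d_0\sigma = v$ and $d_2\sigma = u$, the identities $d_1d_1 = d_1d_2$ and $d_0d_1 = d_0d_0$ give $d_1 w = 0$ and $d_0 w = 2$; the face maps $d_1,d_0\colon X_1\to X_0$ act on the five generators by the substitutions $(0,1,0,1,2)$ and $(1,2,0,1,2)$; and the specialisation $\mu(x,y,z) = \bar W(x,z,x,y,z)$ does collapse both derived identities onto the Malcev laws. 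One small point worth spelling out in a written-up version is why the free simplicial $\mathcal T$-algebra is computed levelwise, i.e.\ that the left adjoint to $U\colon [\Delta^{\mathrm{op}},\mathbb T]\to[\Delta^{\mathrm{op}},\mathbf{Set}]$ is post-composition with $F_{\mathcal T}$ (a consequence of $U$ itself being post-composition with the forgetful functor); you state this but it is the one place a reader might pause.
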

So if we have a constructive proof that all simplicial algebras of $\mathcal T$ are Kan,
we have a constructive proof that $\mathcal T$ is Malcev. 
The converse of this theorem is also known to be true, and was even generalized further in 
\cite[Theorem 3]{KanFibrationsForMalcevAlgebras} to the following:
\begin{theorem}\label{thm:SurjectionMalcevKan}
Any surjective morphism $f:X\to Y$ of simplicial Malcev algebras is a Kan fibration. 
\end{theorem}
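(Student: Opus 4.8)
The plan is to exhibit an explicit filler: take a preliminary lift supplied by surjectivity and then correct it face-by-face using the Malcev operation. Fix a horn $x:\Lambda^n_m\to X$ and a map $y:\Delta^n\to Y$ with $f\circ x = y\circ\iota$; write $y_n\in Y_n$ for the simplex corresponding to $y$ and $x_i := x\circ d_i\in X_{n-1}$ for the faces of the horn, which satisfy the horn-compatibility relations $d_i x_j = d_{j-1}x_i$ for $i<j$, both $\neq m$. Since $f$ is a surjection of simplicial sets, in degree $n$ we may choose $w\in X_n$ with $f(w)=y_n$. The key observation is that $d_i w$ and $x_i$ have the same image under $f$ for every $i\neq m$ (both equal $d_i y_n$), so $w$ can be adjusted inside the fibres of $f$ without changing its $f$-image.

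Correcting a single face uses the two Malcev identities $\mu(a,a,b)=b$ and $\mu(a,b,b)=a$. Given a face index $i\neq m$ and a degeneracy operator $s$ with $d_i\circ s = \mathrm{id}$ (take $s=s_i$, or $s=s_{i-1}$ when $i\geq 1$), replace $w$ by
\[
  w' \;:=\; \mu\bigl(w,\; s(d_i w),\; s(x_i)\bigr).
\]
Because faces, degeneracies and $f$ are all Malcev homomorphisms, applying $d_i$ and using $\mu(a,a,b)=b$ gives $d_i w' = \mu(d_i w, d_i w, x_i) = x_i$, while applying $f$ and using $\mu(a,b,b)=a$ together with $f(s(d_i w)) = s(d_i y_n) = f(s(x_i))$ gives $f(w') = \mu(y_n, s(d_i y_n), s(d_i y_n)) = y_n$. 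So one face can always be put right while staying over $y$.

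It then remains to iterate this in an order that does not destroy faces already corrected. I would correct the ``lower'' faces $0,1,\dots,m-1$ in increasing order, using $s=s_i$ for face $i$, and then the ``upper'' faces $n,n-1,\dots,m+1$ in decreasing order, using $s=s_{i-1}$ for face $i$. Using the simplicial identities $d_j s_i = s_{i-1}d_j$ for $j<i$ (and the dual relations for $j>i$), together with $d_j d_i = d_{i-1}d_j$ for $j<i$ and the horn-compatibility relations, a short computation shows that correcting face $i$ leaves $d_j w$ unchanged for every index $j$ already corrected. After all steps the resulting simplex $z\in X_n$ satisfies $d_i z = x_i$ for all $i\neq m$, i.e.\ $z\circ\iota = x$, and $f(z)=y_n$, i.e.\ $f\circ z = y$; this is the required lift, and no choice principle beyond the given surjection is used.

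The only real content is the bookkeeping in the previous paragraph: choosing the order of correction together with the matching degeneracy so that a later correction is invisible to the earlier faces, and then checking this via the simplicial and horn-compatibility identities. This is where I expect the main (though still routine) work to lie; the edge cases $m=0$ and $m=n$, where only the upper or only the lower run occurs, require the mild care that $s_i$ is defined only for $i\leq n-1$ and $s_{i-1}$ only for $i\geq 1$. An alternative packaging would reduce the relative statement to the absolute one (``every simplicial Malcev algebra is a Kan complex'') by transporting the problem to a fibre-like Malcev algebra over $Y$, but the direct correction above seems cleanest and most in keeping with the constructive, explicit spirit of the paper — and it is the version that should adapt to the degenerate-preferring strengthening in the next subsection.
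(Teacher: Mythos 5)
Your proposal is correct and follows essentially the same route as the construction the paper recalls from \cite{KanFibrationsForMalcevAlgebras} (see Construction \ref{constructionMalcevLiftsHelpers}): start from a preimage of $y$ and correct faces one at a time with the Malcev operator $\mu(w,\, w\circ d_k\circ s_{k'},\, x\circ d_k\circ s_{k'})$, sweeping up through $k=0,\dots,m-1$ with $k'=k$ and then down through $k=n,\dots,m+1$ with $k'=k-1$, exactly as you prescribe. The only difference is that the paper seeds the process with $\beta_n(y)$ for a degeneracy-section $\beta$ in order to also obtain the degenerate-preferring refinement, whereas your arbitrary preimage supplied by surjectivity suffices for the classical statement.
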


\subsection{A degenerate-preferring structure}
In this section, we will show a variation of Theorem \ref{thm:SurjectionMalcevKan}
holds for degenerate-preferring Kan fibrations. 
However, being a surjection turns out 
to be too weak of a notion for a constructive proof. 
Therefore we introduce the following definition: 
\begin{definition}\label{dfn:degeneracy-section}
  Let $\alpha:X\to Y$ be a map of simplicial sets. 
  Let $\beta = (\beta_n: Y_n\to X_n)_{n\in\mathbb N}$ be a collection 
  of functions. 
  We call $\beta$ a \gls{degeneracy-section} of $\alpha$ iff
  \begin{itemize}
    \item for all $n\in\mathbb N$ we have $\alpha_n\circ \beta_n = 1_{Y_n}$.
    \item for all $n\in\mathbb N$ and all $0\leq k\leq n$, 
            we have $\beta_{n+1}(y\circ s_{k}) = \beta_n(y)\circ s_k$. 
  \end{itemize}
\end{definition}

The variation of Theorem \ref{thm:SurjectionMalcevKan} we will show is as follows:
\begin{theorem}\label{thmMainMalcevThm}
  Let $\alpha:X\to Y$ be a morphism of simplicial algebras that has a degeneracy-section. 
  Then $\alpha$ is a degenerate-preferring Kan fibration. 
\end{theorem}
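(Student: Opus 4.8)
The plan is to exhibit an explicit $\text{lift}$ operation built from the Malcev operation $\mu$, the degeneracy-section $\beta$, and the canonical horn filler underlying the constructive proof of \cite[Proposition~1]{KanFibrationsForMalcevAlgebras}. Recall that that proof equips every simplicial Malcev algebra $Z$ with an operation $F$ sending a horn $h:\Lambda^n_m\to Z$ to a filler $F(h):\Delta^n\to Z$, so that $F(h)\circ d_i=h\circ d_i$ for all $i\neq m$; since it is assembled from $\mu$ and the operators $s_i,d_i$, this operation is natural in $Z$, so $\alpha\circ F(h)=F(\alpha\circ h)$ for every horn $h$ in $X$. Write $b:=\beta_n(y)\in X_n$, viewed as a map $\Delta^n\to X$; since $\alpha_n\circ\beta_n=1$ it lies over $y$. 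My proposal is to set
\begin{equation*}
  \text{lift}(x,y)\ :=\ \mu\bigl(F(x),\ F(b\circ\iota),\ b\bigr),
\end{equation*}
where $b\circ\iota:\Lambda^n_m\to X$ is the horn obtained by restricting $b$ along $\iota$, and to note that $\alpha$ is in particular levelwise surjective, so by Theorem~\ref{thm:SurjectionMalcevKan} it is already a Kan fibration; the content is to make the choice explicit and degenerate-preferring.

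First I would check that this is a solution of the lifting problem. Applying $\alpha$ and using naturality of $F$, $\alpha\circ\beta=1$, $\alpha\circ x=y\circ\iota$, and the identity $\mu(u,u,w)=w$, one computes $\alpha\circ\text{lift}(x,y)=\mu\bigl(F(y\circ\iota),F(y\circ\iota),y\bigr)=y$. Restricting to a face $d_i$ with $i\neq m$, using that $F$ extends its argument, that $\mu$ commutes with the simplicial operators, and the identity $\mu(u,w,w)=u$, one computes $\text{lift}(x,y)\circ d_i=\mu\bigl(x\circ d_i,\,b\circ d_i,\,b\circ d_i\bigr)=x\circ d_i$. Hence the square commutes, so $(\alpha,\text{lift})$ is a functional Kan fibration.

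Next I would verify the degenerate-preferring condition. Suppose the problem $(x,y)$ has a degenerate solution $z\circ s_j$. Applying $\alpha$ shows $y=(\alpha\circ z)\circ s_j$ is $s_j$-degenerate, whence the second clause of Definition~\ref{dfn:degeneracy-section} gives $b=\beta_n(y)=\beta_{n-1}(\alpha\circ z)\circ s_j$, so $b$ is itself $s_j$-degenerate; in particular the horn $b\circ\iota$ extends to the degenerate simplex $b$. The crucial input is that $F$ is \emph{degenerate-sensitive}: if a horn extends to some degenerate simplex then $F$ of it is degenerate. Granting this, $F(b\circ\iota)$ is a degenerate filler of $b\circ\iota$, hence equals $b$ by the uniqueness lemma for degenerate fillers recalled above; therefore $\text{lift}(x,y)=\mu(F(x),b,b)=F(x)$ by $\mu(u,w,w)=u$, and $F(x)$ is degenerate since $x$ extends to $z\circ s_j$. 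Thus $\text{lift}(x,y)$ is a degenerate solution of $(x,y)$ — in fact, by the uniqueness lemma, it equals $z\circ s_j$ — which is exactly the degenerate-preferring condition.

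I expect the main obstacle to be establishing the degenerate-sensitivity of the canonical filler $F$: one must open up the inductive construction of $F$ in terms of $\mu$ and the face/degeneracy operators and show that each corrective step preserves degeneracy in the relevant direction, so that a horn arising as the restriction of a degenerate simplex is filled by a degenerate simplex; the interaction of the degeneracy direction with the missing face $m$ will require some care. Everything else reduces to short manipulations with the two Malcev identities and the bookkeeping for horns. (Specializing to $Y=\Delta^0$, which carries a degeneracy-section whenever $X_0\neq\emptyset$, recovers in particular that such simplicial Malcev algebras are degenerate-preferring Kan complexes.)
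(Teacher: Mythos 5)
Your proposal is a genuinely different route from the paper's. The paper works directly on the fibration: it seeds the construction with $\beta_n(y)$, corrects iteratively via the operators $N_k$, and establishes the degenerate-preferring property by tracking the helper functions $w_k$ --- showing inductively that each $w_k$ stays $s_j$-degenerate until $k$ passes through $\{j,j+1\}$, at which point $w_k = g\circ s_j$, which is then a fixed point of the remaining corrections. You instead factor the problem through a Kan-complex filler $F$ and apply a single Malcev correction, setting $\text{lift}(x,y) = \mu\bigl(F(x),\,F(\beta_n(y)\circ\iota),\,\beta_n(y)\bigr)$; the verification that this solves the lifting problem via naturality of $F$, $\alpha\circ\beta=1$, and the two Malcev identities is correct and rather clean, and the reduction of the fibration case to the complex case is a nice modularization.

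The gap, which you correctly flag, is the claim that $F$ is ``degenerate-sensitive.'' This is precisely the $Y = \Delta^0$ instance of the theorem being proved, so the reduction defers --- rather than avoids --- the substantive inductive work: establishing degenerate-sensitivity would require exactly the kind of case analysis on the faces $d_k\circ s_{k'}$ relative to $j$ that the paper carries out for $w_k$. There is also a smaller lacuna in the naturality of $F$. The filler construction from \cite{KanFibrationsForMalcevAlgebras} needs a base simplex to start from, so $F$ is not literally ``assembled from $\mu$ and the $s_i,d_i$'' alone; to make $F$ a well-defined natural operation one must specify that the base point is extracted canonically from the horn itself (say, a fixed vertex of $\Lambda^n_m$, which also supplies the $w_{-1}$ needed by the inductive argument since a fully degenerate simplex is $s_j$-degenerate for every $j$). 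With that choice made explicit and degenerate-sensitivity of $F$ proved --- which, again, amounts to the paper's Lemmas on the $w_k$ specialized to $Y=\Delta^0$ --- your argument would go through.
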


To show the above theorem, we will first recall in Construction 
\ref{constructionMalcevLiftsHelpers} the lifting method that 
\cite{KanFibrationsForMalcevAlgebras} uses to solve the required lifting problem. 
We will then show two properties of this lifting structure and conclude it is 
degenerate-preferring. 

  \begin{construction}\label{constructionMalcevLiftsHelpers}
Let $\alpha:X\to Y$ be a morphism of simplicial Malcev algebras with degeneracy-section
$(\beta_n:Y_n\to X_n)_{n\in\mathbb N}$.
Now consider maps $x,y$ as in 
  Diagram \ref{eqn:dfnKanFib}.

    We shall define for all $k\in \mathbb Z$ with $-1\leq k<m$ or $m<k\leq n+1$ 
    a helper function $w_k:\Delta^n \to X$. 
    We start by defining $w_{-1}=\beta_n(y)$. 

    Now for $k$ as above, we define
    \begin{equation}
      k' = \begin{cases}
        k     \text{ if } k < m \\
        k - 1 \text{ if } k > m \\
      \end{cases}
    \end{equation}
For $0\leq k\leq n$ with $k\neq m$, we let $N_k: X_n\to X_n$ be as follows:
  \begin{equation}
    N_k (w) = \mu(w, w\circ d_k\circ s_{k'} , x\circ d_k \circ s_{k'})
  \end{equation}
    We can then define $w_{k} = N_k(w_{k-1})$  for $0\leq k <m$, 
    then we take 
    $w_{n+1}=w_{m-1}$ and define 
    $w_k = N_k(w_{k+1})$ for $m<k\leq n$.
And finally, we define 
  $
\text{lift}(x,y):= w_{m+1}
$.
\end{construction}

  In the proof of Theorem 3 of \cite{KanFibrationsForMalcevAlgebras} the following is shown:
\begin{theorem}\label{ThmLiftReallyLifts}
  Suppose that $X,Y$ are Malcev algebras and $\alpha$ has a degeneracy-section in Diagram \ref{eqn:dfnKanFib}.
  Then the map $\mathrm{lift}(x,y)$ from 
  Construction \ref{constructionMalcevLiftsHelpers}  
  solves the lifting problem. 
\end{theorem}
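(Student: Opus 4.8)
The plan is to check directly that $\mathrm{lift}(x,y)=w_{m+1}$ satisfies the two conditions that make it a solution of the lifting problem: that $\alpha\circ\mathrm{lift}(x,y)=y$, and that $\mathrm{lift}(x,y)\circ\iota=x$. Both are proved by induction following the order in which the helpers $w_k$ are produced in Construction \ref{constructionMalcevLiftsHelpers}. The only ingredients needed are: $\alpha$ is levelwise an algebra homomorphism, hence commutes with $\mu$; the commuting square $\alpha\circ x=y\circ\iota$, which for $k\neq m$ gives $\alpha\circ x\circ d_k=y\circ d_k$; the section property $\alpha_n\circ\beta_n=1_{Y_n}$ from Definition \ref{dfn:degeneracy-section}; the Malcev axioms $\mu(a,a,b)=b$ and $\mu(a,b,b)=a$; and the simplicial identities, in particular $s_{k'}\circ d_k=\mathrm{id}$ together with the rules that commute a degeneracy past a face.

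For the fibrewise condition, I would first show $\alpha\circ w_k=y$ for every helper. The base case is $\alpha\circ w_{-1}=\alpha\circ\beta_n(y)=y$. For the inductive step, apply $\alpha$ to $N_k(w)=\mu(w,\,w\circ d_k\circ s_{k'},\,x\circ d_k\circ s_{k'})$; since $\alpha$ commutes with $\mu$, since $\alpha\circ x\circ d_k=y\circ d_k$ because $k\neq m$, and since the inductive hypothesis gives $\alpha\circ w=y$, the result is $\mu(y,\,y\circ d_k\circ s_{k'},\,y\circ d_k\circ s_{k'})$, which equals $y$ by $\mu(a,b,b)=a$. In particular $\alpha\circ\mathrm{lift}(x,y)=\alpha\circ w_{m+1}=y$.

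For the horn condition, the right invariant is that $w_k$ agrees with $x$ on every face $d_l$ that has been ``corrected'' by that stage: the faces $d_0,\dots,d_k$ in the ascending phase $w_0,\dots,w_{m-1}$, and also the faces $d_n,\dots,d_{k+1}$ in the descending phase $w_n,\dots,w_{m+1}$ (the transition $w_{n+1}:=w_{m-1}$ plainly preserves the ascending-phase information). Two computations establish the invariant. First, $N_k(w)\circ d_k=x\circ d_k$ for any $w$: since $s_{k'}\circ d_k=\mathrm{id}$ (this is $s_j\circ d_j=\mathrm{id}$ when $k<m$ and $s_j\circ d_{j+1}=\mathrm{id}$ when $k>m$), the expression collapses to $\mu(w\circ d_k,\,w\circ d_k,\,x\circ d_k)=x\circ d_k$. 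Second, if $l\neq k$ is a face already corrected, then $N_k(w)\circ d_l=x\circ d_l$: one uses the simplicial identities to rewrite $d_k\circ s_{k'}\circ d_l$ as $d_l\circ(\text{a face composed with a degeneracy})$, so that the inductive hypothesis $w\circ d_l=x\circ d_l$ makes the last two arguments of $\mu$ coincide and $\mu(a,b,b)=a$ returns $w\circ d_l=x\circ d_l$. Iterating, $\mathrm{lift}(x,y)\circ d_l=x\circ d_l$ for all $l\neq m$; since $\Lambda^n_m=\bigcup_{l\neq m}d_l$ and $x$ is the unique map into $X$ with those restrictions, $\mathrm{lift}(x,y)\circ\iota=x$.

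The bulk of the work, and the place where care is needed, is the second computation in the horn condition: one must split $l$ according to $l<k'$, $l\in\{k',k'+1\}$, $l>k'+1$, keep track of whether $k<m$ or $k>m$ (so that $k'=k$ or $k'=k-1$), and in each case invoke the correct simplicial identity to commute $s_{k'}$ past $d_l$ and then past $d_k$; organising this so that ``already corrected'' is exactly the condition under which the rewriting succeeds is the main obstacle. Everything else is a short application of the Malcev axioms.
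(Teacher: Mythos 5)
Your proposal is correct, and it is essentially the verification the paper relies on: the paper gives no proof of Theorem \ref{ThmLiftReallyLifts} itself but cites Theorem 3 of \cite{KanFibrationsForMalcevAlgebras}, whose argument is exactly this Moore-style induction (fibrewise condition via $\mu(a,b,b)=a$, faces corrected one at a time via $s_{k'}\circ d_k=\mathrm{id}$ and the commutation identities, with the check that an already-corrected face $l$ never lands in $\{k',k'+1\}\setminus\{k\}$), in the same style as the paper's own lemmas on the degenerate-preferring property. One cosmetic point: in the descending phase the invariant should record that $w_k$ agrees with $x$ on $d_k,\dots,d_n$ (not only $d_{k+1},\dots,d_n$), since otherwise $d_{m+1}$ would be missed at the last step; your first computation already supplies agreement on $d_k$, so the argument as given does yield $\mathrm{lift}(x,y)\circ d_l=x\circ d_l$ for all $l\neq m$.
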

  Now suppose that there is a degenerate solution to our lifting problem, 
  meaning there is some $g:\Delta^n\to X$ such that our lifting problem 
  is of the following form:
\begin{equation}\begin{tikzcd}\label{eqnLiftProbWithDegSol}
        \Lambda^{n+1}_m \arrow[rr,"x"] \arrow[d,hook]&& X \arrow[d,"\alpha"] \\
        \Delta^{n+1} \arrow[r, "s_j"] & \Delta^n \arrow[r,"y"] \arrow[ru,"g"] & Y
\end{tikzcd}\end{equation}
We shall show that in this case 
$\text{lift}(x,y\circ s_j) = g\circ s_j $.

For the rest of this section, we will say 
that $k$ is \textbf{encountered before} $l$ or $l$ is 
\textbf{encountered after} $k$ is 
$w_k$ is defined before $w_l$ in construction \ref{constructionMalcevLiftsHelpers}.
With this terminology, we can formulate the rest of our proof strategy:
\begin{itemize}
  \item
  We will first prove by induction that whenever $k$ is encountered before $j,j+1$, 
    we have that $w_k$ is of the form $z\circ s_j$ for some $z\in X_{n}$. 
\item
  We will then prove that for $k$ the first among $j,j+1$ we encounter
    we have $w_k = g\circ s_j$. 
\item We will then show that $w_k=g\circ s_j$ for all $k$ encountered later.
\end{itemize}

\begin{lemma}
  For a lifting problem 
  with a degenerate solution 
  as in Equation \ref{eqnLiftProbWithDegSol}
  and
  $w_k$ as in Construction \ref{constructionMalcevLiftsHelpers}, 
  whenever $w_k$ is defined before $w_j$ and $w_{j+1}$, we have $w_k= z\circ s_j$ 
  for some $z\in X_n$.
\end{lemma}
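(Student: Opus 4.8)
The plan is to prove this by induction on the order in which the helper functions $w_k$ are defined in Construction \ref{constructionMalcevLiftsHelpers}, i.e.\ along the sequence $w_{-1}, w_0, w_1, \dots, w_{m-1}$ (which then jumps to $w_{n+1}, w_n, \dots$). The key observation driving the whole argument is that the operator $N_k$, applied to a map of the form $z \circ s_j$, again produces a map of the form $z' \circ s_j$, provided we are still at a stage where the face $d_k$ involved in $N_k$ interacts with $s_j$ in a controlled way — precisely, as long as $k \notin \{j, j+1\}$, which is exactly the hypothesis ``$w_k$ is defined before $w_j$ and $w_{j+1}$''.

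First I would establish the base case: $w_{-1} = \beta_n(y \circ s_j)$, and since $y$ in Equation \ref{eqnLiftProbWithDegSol} factors as $y \circ s_j$ with $s_j : \Delta^{n+1} \to \Delta^n$, the second defining property of a degeneracy-section (Definition \ref{dfn:degeneracy-section}) gives $\beta_{n+1}(y \circ s_j) = \beta_n(y) \circ s_j$, so $w_{-1}$ has the desired form with $z = \beta_n(y)$. For the induction step, suppose $w_{k-1} = z \circ s_j$ (or $w_{k+1} = z\circ s_j$ in the descending part) and that $k \neq j, j+1$. Then
\[
w_k = N_k(w_{k-1}) = \mu\bigl(z \circ s_j,\; (z\circ s_j)\circ d_k \circ s_{k'},\; x \circ d_k \circ s_{k'}\bigr).
\]
The heart of the argument is a bookkeeping lemma on the simplicial identities: since $k \neq j, j+1$, the composite $s_j \circ d_k$ can be rewritten (via the simplicial identity $s_j \circ d_k = d_{k''} \circ s_{j'}$ for appropriate indices $k'', j'$) so that, after post-composing with $s_{k'}$ and using $s_a \circ s_b = s_b \circ s_{a+1}$ to move the degeneracy $s_j$ to the outside, the middle term $(z\circ s_j) \circ d_k \circ s_{k'}$ becomes of the form $(\text{something}) \circ s_j$. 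The same rewriting, applied to the restriction of $x$, should be handled by noting that $x$ is a horn map and its relevant faces $x \circ d_k$ (for $k \neq m, m^*$) are themselves restrictions of $g \circ s_j$ along the pullback — more directly, one can observe that $x \circ d_k \circ s_{k'}$ is a face-then-degeneracy of the horn data, and since the whole lifting problem is $g \circ s_j \circ \iota$, one can factor a degeneracy $s_j$ out of this term as well. Then since all three arguments of $\mu$ are of the form $(-) \circ s_j$ and $\mu$ is applied pointwise (it is a natural transformation, so it commutes with precomposition by $s_j$), the output $w_k = \mu(\dots) \circ s_j$ as required.

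The main obstacle I anticipate is the index bookkeeping: one must carefully track how $k$, $k'$, $j$, $m$ interact through the three simplicial identities, distinguish the cases $k < j$ versus $k > j+1$, and separately handle the ascending chain $w_0, \dots, w_{m-1}$ versus the descending chain $w_{n+1}, w_n, \dots$ — in particular making sure that ``$w_k$ defined before $w_j$ and $w_{j+1}$'' really does force $k \notin \{j, j+1\}$ at every step of the relevant sub-chain, which follows from the order in which Construction \ref{constructionMalcevLiftsHelpers} visits the indices. Once the simplicial-identity manipulation is set up as a reusable sub-lemma (``if $u = z \circ s_j$ and $k \neq j, j+1$ then $u \circ d_k \circ s_{k'}$ and $x \circ d_k \circ s_{k'}$ both factor through $s_j$''), the induction itself is routine, and the pointwise naturality of $\mu$ closes it.
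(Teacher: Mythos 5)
Your proposal is correct and follows essentially the same route as the paper: base case via the degeneracy-section property, inductive step via the simplicial identities applied to $s_j\circ d_k\circ s_{k'}$ in the two cases $k<j$ and $k>j+1$, factoring $s_j$ out of all three arguments of $\mu$ using $x = g\circ s_j\circ\iota$, and concluding with the naturality of $\mu$ as a map of simplicial sets. The only step the paper makes explicit that you leave as a bookkeeping concern is the transition $w_{n+1}=w_{m-1}$ at the start of the descending chain, which inherits the degenerate form directly from the induction hypothesis.
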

\begin{proof}
  We will proceed by induction. 
We defined $w_{-1} = \beta_{n+1}(y\circ s_j)$, and because $\beta$ was a degeneracy-section, 
we get that $w_{-1} = \beta_n(y)\circ s_j$.

Now let $0\leq k <m$ or $m<k \leq n+1$, 
suppose $k$ is encountered before $j,j+1$, and 
whenever $l$ is encountered before $k$, we have 
  $w_l=z_l\circ s_j$ for some $z_l\in X_n$. 

If $k=n+1$, we have that $w_{k}=w_{m-1}$, which is of the form $z_{m-1}\circ s_j$ 
by the induction hypothesis. 
If $k\neq n+1$, we have $0\leq k \leq n$ and $k\neq m$, and
we defined 
$w_k = N_k(w_l)$ for some earlier defined
$w_l\in X_{n+1}$. By the induction hypothesis, $w_l=z\circ s_j$ for some $z\in X_n$. 
Now 
  \begin{equation}
    N_k(w)=N_k(z\circ s_j) = 
    \mu\big(
                z\circ s_j ,
                z\circ s_j \circ d_k \circ s_{k'} , 
                x\circ d_k\circ s_{k'}
                \big)
  \end{equation}
  Note that $x\circ d_k = g\circ s_j\circ d_k$ by Equation \ref{eqnLiftProbWithDegSol}. 
We will now calculate $s_j\circ d_k \circ s_{k'}$. 
As $k$ is encountered before $j,j+1$, we have $k\neq j,j+1$. 
We will now make a case distinction on whether 
$k<j$ or $k>j+1$.
        \begin{itemize}
                \item If $k>j+1$, we have that 
                        $s_j\circ d_k = d_{k-1}\circ s_j$.
                        Also it follows that $j<k'$, and therefore 
                        $s_j\circ s_{k'} = s_{k'-1} \circ s_j$. 
                        In this case, we may conclude that 
                        $
                                s_j\circ d_k \circ s_{k'} = 
                                d_{k-1}\circ s_{k'-1} \circ s_j.
                        $
                \item If $k<j$, we have that 
                        $s_j\circ d_k = d_k \circ s_{j-1}$.
                        Also it follows that $k'<j$, and therefore
                        $s_{j-1}\circ s_{k'} = s_{k'} \circ s_{j}$. 
                        Hence 
                        $
                                s_j\circ d_k \circ s_{k'} = 
                                d_{k}\circ s_{k'} \circ s_j.
                        $
        \end{itemize}
        Hence for $k\neq j,j+1$, there is some morphism $f$ in $\Delta$ such that 
        $
                s_j\circ d_k \circ s_{k'} = 
                f \circ s_j.
        $ 
        For this $f$, it follows that 
        \begin{equation}
           N_k (z\circ s_j) = 
                \mu\big(
                z\circ s_j ,
                z\circ s_j \circ d_k \circ s_{k'} , 
                g\circ s_j \circ d_k\circ s_{k'}
                \big) = 
                \mu\big(
                z\circ s_j ,
                z\circ f\circ s_j ,
                g\circ f\circ s_j 
                \big) 
        \end{equation}
        Now as $X$ is a simplicial Malcev algebra, $\mu$ respects the 
        simplicial morphisms and we have 
        \begin{equation}
          w_k =N_k(w)=N_k(z\circ s_j) = 
                \mu\big(
                z\circ s_j ,
                z\circ f\circ s_j ,
                g\circ f\circ s_j 
                \big) 
                = 
                \mu\big(
                z,
                z\circ f,
                g\circ f
                \big) \circ s_j
        \end{equation}
        By induction, it follows that for all $k$ encountered before $j,j+1$, 
        there is some $z\in X^n$ such that 
        $w_k=z\circ s_j$. 
\end{proof}
        Now we will consider what happens when we construct $w_j$ or $w_{j+1}$.
\begin{lemma}
  For $k$ the first of $j,j+1$ we encounter, we have 
 $w_k = g\circ s_j$. 
\end{lemma}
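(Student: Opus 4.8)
The plan is to run Construction \ref{constructionMalcevLiftsHelpers} for the lifting problem of Equation \ref{eqnLiftProbWithDegSol}, so that the helpers $w_k$ live in $X_{n+1}$ and are computed in the order $w_{-1},w_0,\dots,w_{m-1}$, then $w_{n+2}:=w_{m-1}$, then $w_{n+1},w_n,\dots,w_{m+1}$ (all dimensions in the construction are shifted up by one in this application). I would split according to the dichotomy $j\le m-1$ versus $j\ge m$, which is exhaustive since no integer lies strictly between $j$ and $j+1$. If $j\le m-1$, then $j$ is reached during the increasing phase and strictly before $j+1$ (which either comes later in that phase, or equals $m$ and is never visited), so the first of $j,j+1$ we encounter is $k=j$. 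If $j\ge m$, then $j$ and $j+1$ both fall in, or past, the decreasing phase, which visits larger indices first, so the first we encounter is $k=j+1$ (and when $j=m$ the index $j$ is simply skipped, while $k=j+1=m+1$ is the final value $\text{lift}(x,y\circ s_j)$ itself).

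In either regime $w_k=N_k(w_l)$, where $w_l=w_{j-1}$ if $k=j$ and $w_l=w_{j+2}$ if $k=j+1$ — with $w_{j-1}$ meaning $w_{-1}=\beta_{n+1}(y\circ s_j)=\beta_n(y)\circ s_j$ when $j=0$, and $w_{j+2}$ meaning the copy $w_{n+2}=w_{m-1}$ when $j=n$. In all of these cases $l\neq m$ and $l$ is encountered before both $j$ and $j+1$ (vacuously so for whichever of the two coincides with the skipped index $m$), so the previous lemma applies and yields $w_l=z\circ s_j$ for some $z\in X_n$. The computation then hinges on two observations: since $k\in\{j,j+1\}$ the simplicial identities give $s_j\circ d_k=\mathrm{id}$, and in both regimes $k'=j$ (namely $k'=k=j$ when $k=j<m$, and $k'=k-1=j$ when $k=j+1>m$). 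Substituting $w=z\circ s_j$ into $N_k$, and using $x\circ d_k=g\circ s_j\circ d_k$ (commutativity of Equation \ref{eqnLiftProbWithDegSol}, legitimate since $k\neq m$), the identity $s_j\circ d_k=\mathrm{id}$, and the fact that $\mu$ respects the simplicial operators, the last two arguments of $N_k$ collapse to $z\circ s_j$ and $g\circ s_j$, whence
\[
  w_k \;=\; N_k(z\circ s_j) \;=\; \mu\bigl(z\circ s_j,\; z\circ s_j,\; g\circ s_j\bigr) \;=\; g\circ s_j
\]
by the Malcev axiom $\mu(a,a,b)=b$.

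The Malcev computation is essentially a one-liner; the real work is the index bookkeeping, and that is where I expect the main obstacle to be. One must verify, across the boundary sub-cases $j=0$, $j=n$, $j=m-1$ and $j=m$ (where one of $j,j+1$ coincides with the skipped index $m$, or where $w_l$ degenerates to $w_{-1}$ or to the copy $w_{n+2}$), that $k$ is correctly identified as the first of $j,j+1$ to be encountered and that the index $l$ feeding into $N_k$ is genuinely both distinct from $m$ and encountered before the relevant one(s) of $j,j+1$, so that the previous lemma is applicable. Once those are pinned down, reading off $s_j\circ d_k=\mathrm{id}$ and $k'=j$ from the simplicial identities and Construction \ref{constructionMalcevLiftsHelpers} completes the argument.
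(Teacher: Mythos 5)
Your proof is correct and follows the same route as the paper's: determine which of $j,j+1$ is reached first, verify $k'=j$ in each regime, use $s_j\circ d_k=\mathrm{id}$ together with $x\circ d_k=g\circ s_j\circ d_k$ to collapse the second and third arguments of $\mu$, and finish with the Malcev axiom. One small point in your favour: you explicitly fold the boundary case $j=m$ into the $j\ge m$ branch (where $j$ is simply skipped and $k=j+1=m+1$ is the terminal helper $w_{m+1}$), whereas the paper's case split names only $j<m$ and $j>m$; the formula $k'=k-1=j$ for $k>m$ does cover $j=m$ just as well, but your version spells it out.
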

\begin{proof}
  Let $k$ be the first of $j,j+1$ we encounter. 
  Whether we first encounter $j$ or $j+1$ depends on whether $j<m$ or $j>m$: 
  \begin{itemize}
    \item Note that if $j>m$, we have $k=j+1$. As $k>m$, we have $k'=k-1=j$. 
    \item  If $j<m$, we have $k=j$. As $k<m$, we have $k'=k=j$. 
  \end{itemize}
  So in both cases, we have $k' =j$. Also 
  remark that $s_j\circ d_j = s_j\circ d_{j+1} = id$. Thus $s_j\circ d_k = id$. 
  By the previous Lemma, $w_k = N_k(z\circ s_j)$ for some $z\in X_n$. We get that
  \begin{equation}
    N_k(z\circ s_j) = 
  \mu\big(
    z\circ s_j , z\circ s_j\circ d_k \circ s_{k'} , g\circ s_j \circ d_k \circ s_{k'}
    \big) =
          \mu\big(z\circ s_j , z\circ s_{j} , g\circ s_j\big).
  \end{equation}
  And as the first two entries are equal, we get $w_k = g\circ s_j$, as required. 
\end{proof}

Now to see that for all $k$ encountered after $j,j+1$ also have $w_k=g\circ s_j$, 
consider the following lemma:
\begin{lemma}
  For any $k$, a solution to the lifting problem in Diagram \ref{eqn:dfnKanFib}
  is a fixed point for $N_k$. 
\end{lemma}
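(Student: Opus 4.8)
The claim is that if $g : \Delta^n \to X$ solves a lifting problem as in Diagram \ref{eqn:dfnKanFib} (that is, $g \circ \iota = x$ and $\alpha \circ g = y$), then $N_k(g) = g$ for every admissible $k$. The plan is to simply unwind the definition of $N_k$ and invoke the Malcev identity $\mu(a,b,b) = a$. First I would write
\begin{equation*}
N_k(g) = \mu\big(g,\ g\circ d_k\circ s_{k'},\ x\circ d_k\circ s_{k'}\big).
\end{equation*}
Since $g$ solves the lifting problem, $g$ restricted to $\Lambda^n_m$ agrees with $x$, and in particular $g \circ d_k = x \circ d_k$ for every $k \neq m$ (these faces factor through the horn inclusion $\iota$). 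Precomposing with $s_{k'}$ gives $g\circ d_k\circ s_{k'} = x\circ d_k\circ s_{k'}$, so the second and third arguments of $\mu$ coincide. Applying the Malcev axiom $\mu(a,b,b)=a$ pointwise (which is legitimate because $X$ is a simplicial Malcev algebra, so $\mu$ is a natural transformation and the identities hold levelwise) yields $N_k(g) = g$.

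The only thing that needs care is the justification that $g\circ d_k = x\circ d_k$, i.e.\ that the face $d_k$ with $k\neq m$ really does factor through $\Lambda^n_m$; this is immediate from the definition $\Lambda^n_m = \bigcup_{k\neq m} d_k$ and the commutativity $g\circ\iota = x$. There is no genuine obstacle here: the lemma is a one-line consequence of the Malcev identity, and its role in the larger argument is to propagate the equality $w_k = g\circ s_j$ from the first of $j,j+1$ that is encountered to all later $k$, since each such $w_k$ is obtained from the previous one by applying some $N_k$, and $g\circ s_j$ — being the composite of a solution with $s_j$ — is itself a solution to the correspondingly degenerated lifting problem, hence a fixed point of $N_k$ by this lemma. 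I would state the lemma for an arbitrary solution (not just $g$ or $g\circ s_j$) precisely so that it can be reused in this way.
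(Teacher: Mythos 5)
Your argument matches the paper's proof: both observe that a solution $h$ satisfies $h\circ d_k = x\circ d_k$ for $k \neq m$, so the last two arguments of $\mu$ in $N_k(h)$ coincide, and the Malcev identity $\mu(a,b,b)=a$ gives $N_k(h)=h$. The extra commentary on why $d_k$ factors through the horn and on the lemma's role in the larger argument is correct but not in the paper's (very terse) proof.
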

\begin{proof}
  For any solution $h$, we have 
  \begin{equation} 
    N_k(h) = 
   \mu\big(h, h\circ d_k \circ s_{k'} , x \circ d_k \circ s_{k'}\big)
  \end{equation}
  As $h$ is a solution we have $h\circ d_k = x\circ d_k$, thus the last two entries 
  in $\mu$ above are equal. Thus $N_k(h)=h$. 
\end{proof}
Therefore, we will have that $w_k=N_k(g\circ s_j)$ for all $k$ encountered after $j,j+1$. 
In particular, we will have that $\text{fil}(x,y\circ s_j) = g\circ s_j$ as required.

  So to summarize, to show Theorem \ref{thmMainMalcevThm}, we use Construction 
  \ref{constructionMalcevLiftsHelpers} to give a lifting structure for $\alpha$. 
  It is really a lifting structure by Theorem \ref{ThmLiftReallyLifts}.
  We showed that if a degenerate solution of the form $g\circ s_j$ exists, 
  the helper functions $w_k$ from Construction \ref{constructionMalcevLiftsHelpers} 
  will always be degenerate along $j$, and as soon as $k$ is encountered within or after 
  $\{j,j+1\}$, we have $w_k = g\circ s_j$. 
  In particular $\text{lift}(x,y\circ s_j) = g\circ s_j$. 
  Therefore the lifting structure for $\alpha$ is degenerate-preferring. 
\subsection{Some consequences}
In this section we give some corollaries of Theorem \ref{thmMainMalcevThm} 
relating the notions in Section \ref{sec:Definitions} back to the classical notions of 
Kan fibrations. 

\begin{corollary}\label{corSimpObjMalcevDegPrefKanCom}
  All inhabited simplicial algebras of a Malcev theory are degenerate-preferring Kan complexes.
\end{corollary}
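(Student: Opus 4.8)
The plan is to deduce this directly from Theorem \ref{thmMainMalcevThm} by exhibiting a degeneracy-section of the unique map $\alpha : X \to \Delta^0$. First I would observe that the terminal simplicial set $\Delta^0$ carries a (unique) structure of simplicial algebra for the Malcev theory $\mathcal T$: at each level $(\Delta^0)_n$ is a singleton, on which the value of every operation of $\mathcal T$ is forced, and all equational axioms — in particular the Malcev axioms $\mu(x,x,y)=y$, $\mu(x,y,y)=x$ — hold trivially. Hence $\alpha : X \to \Delta^0$ is a morphism of simplicial $\mathcal T$-algebras, and Theorem \ref{thmMainMalcevThm} applies to it as soon as $\alpha$ has a degeneracy-section.

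Next I would construct that degeneracy-section. Since $X$ is inhabited, it has a $0$-simplex $a \in X_0$: if instead $X_n$ is inhabited for some $n>0$, apply $X$ to any vertex $[0]\to[n]$ in $\Delta$ to obtain one. For each $n$ let $\sigma_n : \Delta^n\to\Delta^0$ be the unique such map of simplicial sets (corresponding to the unique morphism $[n]\to[0]$ in $\Delta$), and define $\beta_n : (\Delta^0)_n \to X_n$ by $\beta_n(\ast) = a\circ\sigma_n$, the totally degenerate $n$-simplex on $a$. The condition $\alpha_n\circ\beta_n = 1_{(\Delta^0)_n}$ is automatic because $(\Delta^0)_n$ is a singleton. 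For the degeneracy compatibility, note that by uniqueness of maps into $[0]$ we have $\sigma_n\circ s_k = \sigma_{n+1}$ for every $0\le k\le n$, and that $\ast\circ s_k = \ast$ in $\Delta^0$; therefore
\begin{equation}
\beta_{n+1}(\ast\circ s_k) = \beta_{n+1}(\ast) = a\circ\sigma_{n+1} = a\circ\sigma_n\circ s_k = (a\circ\sigma_n)\circ s_k = \beta_n(\ast)\circ s_k .
\end{equation}
So $\beta = (\beta_n)_{n\in\mathbb N}$ is a degeneracy-section of $\alpha$ in the sense of Definition \ref{dfn:degeneracy-section}.

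Finally, Theorem \ref{thmMainMalcevThm} yields that $\alpha : X \to \Delta^0$ is a degenerate-preferring Kan fibration, which by definition means exactly that $X$ is a degenerate-preferring Kan complex. There is essentially no obstacle in this argument; the only two points that require a word of care are verifying that $\Delta^0$ genuinely is a simplicial $\mathcal T$-algebra (so that the hypothesis "morphism of simplicial algebras" in Theorem \ref{thmMainMalcevThm} is met), and noting that "inhabited" may be promoted from an arbitrary level to level $0$ in order to choose the base point $a$ from which the totally degenerate simplices $\beta_n(\ast)$ are built.
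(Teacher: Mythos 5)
Your proof is correct and follows essentially the same route as the paper's: promote inhabitedness to level $0$, take the totally degenerate simplices on a chosen $0$-simplex as the degeneracy-section (your $a\circ\sigma_n$ is the paper's $(s_0)^n(x)$), note that $\Delta^0$ carries the trivial Malcev-algebra structure, and invoke Theorem~\ref{thmMainMalcevThm}. The only cosmetic difference is that the paper observes $\beta$ is in fact a full section (a morphism of simplicial sets), which gives the degeneracy-section property for free, whereas you verify the two axioms of Definition~\ref{dfn:degeneracy-section} directly.
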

\begin{proof}
  Let $X$ be a inhabited simplicial algebra of a Malcev theory. Then
  $X$ is inhabited iff $X_0$ is inhabited. 
  Let $x\in X_0$ witness this. Then the morphism $\beta:\Delta^0\to X$ given by 
  $\beta_n(*) = (s_0)^n(x)$ is a section, and hence a degeneracy-section of the 
  unique morphism $X\to \Delta^0$. 

  Note also that the terminal simplicial set $\Delta^0$ 
  can be given the structure of a Malcev algebra with operation $\mu(*,*,*)=*$.
  This structure is such that for any Malcev algebra $Y$ the morphism $Y\to \Delta^0$ 
  is a morphism of Malcev algebras. 

  Thus $X\to \Delta^0$ is a morphism of Malcev algebras with a degeneracy-section. 
  Therefore it is a degenerate-preferring Kan fibration. 
  Thus $X$ is a degenerate-preferring Kan complex. 
\end{proof}
Note that $X$ is inhabited as soon as we have a map $\Lambda^n_m \to X$. 
Therefore, the unique map $X\to \Delta^0$ can be assigned 
a degenerate-preferring lifting function against horn inclusions. 
Thus all simplicial algebras of a Malcev theory are degenerate-preferring Kan complexes. 
Under this consideration, we have the following corollary:
\begin{corollary}\label{corAllSimpKanMalcev}
  Let $\mathcal T$ be an algebraic theory. The following are equivalent:
  \begin{enumerate}[(i)]
    \item $\mathcal T$ is a Malcev theory. 
    \item All simplicial algebras of $\mathcal T$ are degenerate-preferring Kan complexes.
    \item All simplicial algebras of $\mathcal T$ are Kan complexes.
    \item All simplicial algebras of $\mathcal T$ are uniform Kan complexes.
    \item All simplicial algebras of $\mathcal T$ are symmetric effective Kan complexes.
    \item All simplicial algebras of $\mathcal T$ are effective Kan complexes.
  \end{enumerate}
\end{corollary}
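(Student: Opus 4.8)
The plan is to prove the six conditions equivalent by assembling a cycle of implications, almost every link of which is already available from the material above. The backbone is
\[
(\mathrm{i}) \Rightarrow (\mathrm{ii}) \Rightarrow (\mathrm{vi}) \Rightarrow (\mathrm{iv}) \Rightarrow (\mathrm{iii}) \Rightarrow (\mathrm{i}),
\]
together with the single extra arrow $(\mathrm{ii}) \Rightarrow (\mathrm{v}) \Rightarrow (\mathrm{iii})$, which then drags (v) into the equivalence class as well. Concretely: $(\mathrm{iii}) \Rightarrow (\mathrm{i})$ is Proposition~\ref{thm:AllSimpObjectsKanImpliesMalcevTheory}; $(\mathrm{i}) \Rightarrow (\mathrm{ii})$ is Corollary~\ref{corSimpObjMalcevDegPrefKanCom} extended, via the paragraph following it, from inhabited algebras to all simplicial algebras (a horn map into $X$ already forces $X$ to be inhabited, so the empty algebra, which is possible when $\mathcal T$ has no constants, is handled vacuously); and $(\mathrm{ii}) \Rightarrow (\mathrm{iii})$, $(\mathrm{iv}) \Rightarrow (\mathrm{iii})$, $(\mathrm{v}) \Rightarrow (\mathrm{iii})$ are all just ``forget the chosen lifting structure''.

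For the remaining links I would hang the three strengthened notions off (ii). The Remarks in Section~\ref{sec:Definitions} give $(\mathrm{ii}) \Rightarrow (\mathrm{v})$ (a degenerate-preferring Kan fibration always picks the degenerate lift, hence is symmetric effective) and $(\mathrm{ii}) \Rightarrow (\mathrm{vi})$ (the argument of Corollary~12.2 of \cite{BergFaber}, recalled in the last remark of Section~\ref{sec:EffKanCom}, shows degenerate-preferring Kan fibrations are effective). Then $(\mathrm{vi}) \Rightarrow (\mathrm{iv})$ because every effective Kan fibration is a uniform Kan fibration by Corollary~11.2 of \cite{BergFaber}. Tracing the arrows, from any of (iv), (v), (vi) one arrives at (iii), hence at (i), hence at (ii), hence back at all of (iv), (v), (vi); combined with $(\mathrm{i})\Leftrightarrow(\mathrm{ii})\Leftrightarrow(\mathrm{iii})$ from the backbone, all six statements are equivalent.

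The main thing to be careful about --- there is essentially no new computation --- is that (ii), (iv), (v), (vi) name extra structure, not a property: ``all simplicial algebras of $\mathcal T$ are degenerate-preferring (resp.\ symmetric effective, effective, uniform) Kan complexes'' must be read as ``can be equipped with'' such a structure, and in the direction $(\mathrm{i}) \Rightarrow (\mathrm{ii})$ the structure one produces is precisely the one coming from Construction~\ref{constructionMalcevLiftsHelpers} via Theorem~\ref{thmMainMalcevThm}. No genuine obstacle remains; the statement is a packaging of Theorem~\ref{thmMainMalcevThm}, Proposition~\ref{thm:AllSimpObjectsKanImpliesMalcevTheory}, Corollary~\ref{corSimpObjMalcevDegPrefKanCom}, and the cited results of \cite{BergFaber}.
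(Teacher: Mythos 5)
Your proposal is correct and follows essentially the same cycle of implications as the paper: $(i)\Rightarrow(ii)$ via Corollary~\ref{corSimpObjMalcevDegPrefKanCom} and the following paragraph, $(ii)$ implies the other lifting-structured notions via the Section~\ref{sec:Definitions} remarks and the cited results of \cite{BergFaber}, these imply $(iii)$ by forgetting structure, and $(iii)\Rightarrow(i)$ is Proposition~\ref{thm:AllSimpObjectsKanImpliesMalcevTheory}. The only cosmetic difference is that you spell out the chain $(ii)\Rightarrow(vi)\Rightarrow(iv)$ to reach uniform Kan complexes, where the paper states $(ii)\Rightarrow(iv)$ directly while implicitly relying on the same two cited facts.
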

\begin{proof}
$(i)\implies (ii)$ by the above. 
$(ii)\implies (iii),(iv),(v),(vi)$ by the discussion in Section \ref{sec:Definitions}. 
Any of $(iv),(v),(vi)$ imply $(iii)$, and 
$(iii)\implies (i)$ is Proposition \ref{thm:AllSimpObjectsKanImpliesMalcevTheory}.
\end{proof}
We mention the following as it was the starting question for the research in this paper:
\begin{corollary}
  Simplicial groups are degenerate-preferring Kan complexes. 
\end{corollary}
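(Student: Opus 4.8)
The plan is to recognize this statement as an immediate specialization of Corollary~\ref{corSimpObjMalcevDegPrefKanCom}. First I would recall from Example~\ref{exampleGroupIsMalcev} that the theory of groups is a Malcev theory, witnessed by the term $\mu(x,y,z) = x y^{-1} z$, which satisfies $\mu(x,x,y) = y$ and $\mu(x,y,y) = x$. A simplicial group is, by definition, a functor $\Delta^{op} \to \mathbf{Grp}$, that is, a simplicial algebra of the theory of groups.

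Next I would observe that every simplicial group $G$ is inhabited: the group $G_0$ contains an identity element, so $G_0$ is nonempty and hence $G$ is inhabited in the sense used in Corollary~\ref{corSimpObjMalcevDegPrefKanCom}. (Even this much is not strictly needed, since the remark following that corollary notes that all simplicial algebras of a Malcev theory are degenerate-preferring Kan complexes, because inhabitedness is automatic as soon as a horn map into $G$ exists.) Applying Corollary~\ref{corSimpObjMalcevDegPrefKanCom} to $G$ then gives directly that $G$ is a degenerate-preferring Kan complex, which is the claim.

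I do not expect a genuine obstacle here: the statement is a corollary of an already-established result, and the only facts to verify are the standard one that groups form a Malcev theory and the triviality that a simplicial group has identity elements. If instead one wanted an argument that does not route through Corollary~\ref{corSimpObjMalcevDegPrefKanCom} but appeals to Theorem~\ref{thmMainMalcevThm} directly, I would exhibit a degeneracy-section of the unique map $G \to \Delta^0$ by setting $\beta_n(\ast) = s_0^n(e)$, the identity element of $G_n$, and then check the two clauses of Definition~\ref{dfn:degeneracy-section}: the first is automatic because the codomain is terminal, and the second holds because each degeneracy map $s_k$ of $G$ is a group homomorphism and hence preserves identity elements. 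The ``hard part'', such as it is, is purely bookkeeping of the relevant definitions.
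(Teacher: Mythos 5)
Your proposal matches the paper's own argument exactly: cite Corollary~\ref{corSimpObjMalcevDegPrefKanCom} together with Example~\ref{exampleGroupIsMalcev}, noting that groups form a Malcev theory and that simplicial groups are inhabited via the identity element. The extra remarks you add (the explicit degeneracy-section $\beta_n(\ast)=s_0^n(e)$) are correct but not needed, since the paper already establishes them in the proof of Corollary~\ref{corSimpObjMalcevDegPrefKanCom}.
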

\begin{proof}
  This follows from Corollary \ref{corSimpObjMalcevDegPrefKanCom} and
  Example \ref{exampleGroupIsMalcev}
\end{proof}
\begin{remark}\label{rmkMalcevCat}
  Corollary \ref{corAllSimpKanMalcev} says that we have found all algebraic theories
  $\mathcal T$ such that simplicial algebras of $\mathcal T$ are 
  degenerate-preferring Kan. 
  However, we could look further than simplicial algebras. 
  There is also a notion of Malcev categories, 
  which are regular categories 
  with a condition on the lattice of relations. 
  In \cite{Malcev}, Malcev\footnote{
    This reference is in Russian, and the authors were 
        unable to find a translation. 
        The name Malcev is a transcription, and has also been transcribed
        as Mal'cev, Maltsev and Mal'tsev.}
  showed that for algebras, this is equivalent to 
  having a Malcev operation. 
  Let us mention Theorem 4.6 of \cite{CKP}
  which states that a regular category $\mathcal A$ is a Malcev category iff every simplicial object of 
  $\mathcal A$ is a Kan complex.
\end{remark}

\section{Algebraic weak factorisation systems}\label{sec:LAWFS}
The classical Kan fibrations are part of the ``right class" in the weak factorisation system 
of the Kan-Quillen model structure. 
In this section, we will discuss a constructive variation on weak factorisation systems, 
namely that of an algebraic weak factorisation system (awfs). 
We will show that the symmetric effective and the effective Kan fibrations both fit into 
their own awfs. 
In the first two subsections, we will recall the definition of a double category and an awfs; the definition of an awfs we give here is the one from \cite{Bourke}, where it is called a lifting algebraic weak factorisation system. Indeed, 
these sections are based on \cite{Bourke} and contain no original work. 
We will then define a class of double categories based on horn inclusions and 
study awfss where the left double category comes from this class. 
We will then characterize the vertical morphisms of the 
right double category of such an awfs and show that the 
effective and symmetric effective Kan fibrations satisfy this characterization. 
\subsection{Double categories}
  A (small) \textbf{double category} consists of four sets, 
  called objects, horizontal arrows, vertical arrows and squares. 
These carry the following structure: 
  \begin{itemize}
    \item Both sets of arrows have domains and codomains in the objects, 
      an identity arrow for each object, 
      and an associative composition structure. 
    \item The squares have horizontal and vertical domains and codomains. 
      For subscripts $h,v$ horizontal and vertical and $d,c$ the domain and codomain, 
      these match up as follows:
      \begin{equation}\begin{tikzcd}
        A\arrow[r,"d_h"] \arrow[d,"d_v"']& B \arrow[d,"c_v"] \\
        C \arrow[r, "c_h"'] & D
      \end{tikzcd}\end{equation}
\item For both the horizontal and vertical arrows, there are identity squares and there 
  there is a horizontal and a vertical composition of squares. 
      The horizontal composition interacts with vertical domain and codomain as one would expect, and vice versa. 
      Also, if four squares match up so they can be composed in any order, the order of composition does not matter. 
  \end{itemize}

\begin{example}
  For any category $\mathcal C$, we have the category $\mathbb S\textbf{q}(\mathcal C)$, 
  whose objects are objects from $\mathcal C$, both vertical and horizontal arrows are arrows from $\mathcal C$, 
  and squares are commutative squares from $\mathcal C$. 
\end{example}
We will denote $\alpha:l\to r$ 
for $\alpha$ a square with vertical domain $l$ and codomain $r$. 
If $\alpha$ is a square of $\mathbb S\textbf{q}(\mathcal C)$ with horizontal domain $u$ and codomain $v$, 
we will write $\alpha=[u,v]$.

A \textbf{double functor} is the natural notion of morphism between double categories. 
 
If we have a double functor 
$A : \mathbb A\Rightarrow\mathbb S\textbf{q}(\mathcal C)$, 
we will call $\mathbb A$ a \textbf{double category over $\mathcal C$}.

If we have a double functor $S: \mathbb A \Rightarrow \mathbb B$ which is the identity 
on the objects and both classes of morphisms, and is injective on the squares, 
we call $\mathbb A $ a \textbf{wide subcategory} of $\mathbb B$.

\subsection{Algebraic weak factorisation systems}\label{sec:LiftingAWFS}
Let $
  L:\mathbb L \Rightarrow \mathbb S\textbf q (\mathcal C), 
  R:\mathbb R \Rightarrow \mathbb S\textbf q (\mathcal C)
$.  A \textbf{lifting operation} between $\mathbb L$ and $\mathbb R$ 
is a family $\phi$ which
assigns for all vertical morphisms $l:A\to B$ of $\mathbb L$ and 
  $r:X\to Y$ of $\mathbb R$, 
  and each square $[u,v]:Ll\to Rr$ of $\mathbb S\textbf q (\mathcal C)$, 
a choice of diagonal filler $\phi_{l,r}(u,v):LB\to RX$. 
This choice must satisfy the following conditions:
\begin{itemize}
        \item We have horizontal compatibilities, which is about 
                horizontal composition of squares.
                \begin{equation}
                  \text{ For any }
                        \begin{tikzcd}
                          A' \arrow[d,"l'"'{name=Ll}]
                                \arrow[r,"a"] 
                                & A \arrow[d, "l"{name=Lr}] \\
                                B' \arrow[r,"b"']& B
                                \arrow[from=Ll, to=Lr, phantom ,"\lambda"]
                        \end{tikzcd} \text{ square in $\mathbb L$ and }
                        \begin{tikzcd}
                          X \arrow[d,"r"'{name = Rl}]
                                \arrow[r, "x"]
                                & X' \arrow[d,"r'"{name=Rr}] \\
                                Y 
                                \arrow[r,"y"']
                                & Y' 
                                \arrow[from=Rl, to=Rr, phantom ,"\rho"]
                        \end{tikzcd}
                        \text{ square in $\mathbb R$,}
                \end{equation}
                and any diagram in $\mathcal C$ of the form:
                \begin{equation}\begin{tikzcd}
                  LA' \arrow[d,"Ll'"'{name = Ll}]
                                \arrow[r,"La"] 
                                & LA \arrow[d, "Ll"{name = Lr}] 
                                \arrow[r,"u"]
                                &
                                RX \arrow[d,"Rr"'{name=Rl}]
                                \arrow[r, "Rx"]
                                & RX' \arrow[d,"Rr'"{name=Rr}] 
                                \\
                                LB' \arrow[r,"Lb"']
                                & LB
                                \arrow[r,"v"']
                                &
                                RY 
                                \arrow[r,"Ry"']
                                & RY' 
                                \arrow[from=Rl, to=Rr, phantom ,"R\rho"]
                                \arrow[from=Ll, to=Lr, phantom ,"L\lambda"]
                \end{tikzcd}\end{equation}
                we require that the canonical choices of lifts $LB'\to RX'$ are equal. 
                In particular:
                \begin{itemize}
                \item If $\rho$ is a vertical  identity square in
                        $\mathbb R$, we get the 
                                \textbf{left horizontal compatibility condition}:
                \begin{equation}\label{eqnLeftHorizontalCompatibility}
                        \phi_{l',r}(u\circ La, v\circ Lb) =
                        \phi_{l,r}(u,v)\circ Lb. 
                \end{equation}
                \item If $\lambda$ is a vertical identity square in
                        $\mathbb L$, we get the 
                                \textbf{right horizontal compatibility condition}:
                \begin{equation}\label{eqnRightHorizontalCompatibility}
                        \phi_{l,r'}(Rx \circ u , Ry\circ v ) =
                        Ry \circ \phi_{l,r}(u,v)
                \end{equation}
                \end{itemize}
        \item  We have vertical compatibilities, which is about
                composition of vertical morphisms.
                For any composable vertical morphisms
                  $
                        \begin{tikzcd}[sep=small]
                          A\arrow[r,"l"] &
                                B \arrow[r,"l'"]&
                                C
                        \end{tikzcd} \text{ 
                        in $\mathbb L$ and }
                        \begin{tikzcd}[sep = small]
                          X\arrow[r,"r"] &
                                Y \arrow[r,"r'"]&
                                Z
                        \end{tikzcd} \text{ 
                        in $\mathbb R$,}
                        $
                and any diagram in $\mathcal C$ of the following form:
                \begin{equation}\begin{tikzcd}
                        LA \arrow[d,"Ll"]  \arrow[r,"u"]& RX \arrow[d,"Rr"] \\
                        LB \arrow[d,"Ll'"] & RY \arrow[d,"Rr'"] \\
                        LC \arrow[r,"v"]& RZ
                \end{tikzcd}\end{equation}
                we require that the canonical choices of lifts $LB'\to RX'$ are equal. 
                In particular: 
                        \begin{itemize}
                                \item If $l'$ is an identity, 
                                  we get the \textbf{right vertical compatibility condition}:
                                \begin{equation}\label{eqnRightVerticalCompatibility}
                                        \phi_{l,r'\circ r}(u , v ) =
                                        \phi_{l,r}\big(u, \phi_{l,r'}(Rr \circ u)\big)
                                \end{equation}
                                \item If $r'$ is an identity, 
                                  we get the \textbf{left vertical compatibility condition}:
                                \begin{equation}\label{eqnLeftVerticalCompatibility}
                                        \phi_{l'\circ l,r}(u , v ) =
                                \phi_{l',r}\big(\phi_{l,r}(u,v\circ l') , v\big)
                                \end{equation}
                        \end{itemize}
        \end{itemize}
        If $\phi$ is an $(\mathbb L, \mathbb R)$-lifting operation, we call 
        $(\mathbb L, \phi, \mathbb R)$ a \gls{lifting structure} on $\mathcal C$.

\begin{definition}\label{dfnRLP}
For $\mathbb L$ a double category over $\mathcal C$, 
we define a double category $\RLP(\mathbb L)$ over $\mathcal C$:
\begin{itemize}
  \item The objects and horizontal maps are objects and morphisms from $\mathcal C$. 
  \item Vertical maps are tuples $(f,\psi)$, where $f$ is a morphism from $\mathcal C$
and $\psi$ assigns to each 
    square $[u,v]:Ll\to f$ of $\Sq(\mathcal C)$
a lift $\psi_l(u{,}v):B\to X$, such that $\psi$ satisfies the left horizontal and vertical compatibility conditons from 
Equations (\ref{eqnLeftHorizontalCompatibility}) and (\ref{eqnLeftVerticalCompatibility}). 
\item 
  If $(f,\psi)$ and $(g,\chi)$ are vertical morphisms and 
    we have a commuting square $[x,y]:f\to g$ in $\mathbb S\textbf q(\mathcal C)$, we let 
    $[x,y]$ be a square in \textbf{RLP}($\mathbb L$) if 
      it respects the lifting structures, so 
    for any square $[u,v]:Ll\to f$, we have 
    \begin{equation}\chi_l(x\circ u,v\circ y) = x \circ \psi_l(u,v).\end{equation}
\end{itemize}
Vertical identity morphisms are given by $(1,t)$, where $t_l(u,v)=v$. 
Composition of vertical identity morphisms is given by 
\begin{equation}
  (f,\psi)\circ (g,\chi) = (f\circ g,\phi)
  \text{ with }
\phi_{l}(u , v ) =
\psi_{l}\big(u, \chi_{l}(f  \circ u)\big)
. \end{equation}
All other identities and compositions are as in $\mathcal C$. 
The map $\textbf{RLP}(\mathbb L)\to \mathbb S\textbf q (\mathcal C)$ sends vertical morphisms $(f,\psi)$ to $f$, 
and the objects, horizontal morphisms and squares to themselves. 
\end{definition}
Dually, given a double category $\mathbb R$ over $\mathcal C$, 
we have a double category $\textbf{\gls{LLP}}(\mathbb R)$ over $\mathcal C$. 

Every lifting structure $(\mathbb L, \phi, \mathbb R)$  on $\mathcal C$ induces double functors 
\[ \mathbb{R} \to \textbf{RLP}(\mathbb L) \quad \mbox{and} \quad \mathbb L \to \textbf{LLP}(\mathbb R) \]
sending vertical morphisms $f$ to $(f,\phi_{-,f})$ and $(f,\phi_{f,-})$, respectively.
  The lifting structure $(\mathbb L, \phi, \mathbb R)$  on $\mathcal C$ is called an 
  \textbf{algebraic weak factorisation system}, or
  \textbf{awfs}, if it satisfies the following two axioms:
  \begin{enumerate}
    \item\textbf{Axiom of lifting}: The double functors 
      $\mathbb R\hookrightarrow\textbf{RLP}(\mathbb L)$ and 
      $\mathbb L\hookrightarrow\textbf{LLP}(\mathbb R)$ are invertible. 
    \item \textbf{Axiom of factorisation}: each morphism $f:A\to B$ of $\mathcal C$ admits 
      a factorisation \begin{equation}\begin{tikzcd}
      A\arrow[r, "L(l_f)"] & C_f\arrow[r,"R(r_f)"] & B\end{tikzcd} \end{equation} 
      with $l_f,r_f$ vertical morphisms from $\mathbb L, \mathbb R$.
      This factorisation is bi-universal, meaning that 
  \begin{itemize}
    \item For any vertical morphism $l$ of $\mathbb L$ and square $\alpha:Ll\to f$ in $\mathbb S\textbf q (\mathcal C)$ 
      there is a unique square $\lambda:l\to l_f$ in $\mathbb L$ such that $\alpha$ factors as 
      $[1_A,R(r_f)]\circ L \lambda$.
\item For any  vertical morphism $r$ of $\mathbb R$ and square $\beta: f\to Rr$ in $\mathbb S\textbf q (\mathcal C)$ 
  there is a unique square $\rho:r_f\to r$ in $\mathbb R$ such that $\beta$ factors as
      $R\rho\circ [L(l_f),1_B]$.
  \end{itemize}
 \end{enumerate}
Now we will state without proof Proposition 18 from \cite{Bourke}:
\begin{proposition}\label{Prop:BourkeCofGenerated}
        Let $\mathcal C$ be a locally presentable category and let $\mathbb L$ be 
        a small double category over $\Sq(\mathcal C)$. 
        Then the canonical lifting structure 
        $(\LLP(\RLP(\mathbb L)), \phi, \RLP(\mathbb L))$ is an awfs.
\end{proposition}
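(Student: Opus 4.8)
The plan is to reduce Proposition~\ref{Prop:BourkeCofGenerated} to a known general fact about cofibrantly generated algebraic weak factorisation systems. The statement asserts that for a locally presentable $\mathcal C$ and a small double category $\mathbb L$ over $\Sq(\mathcal C)$, the canonical lifting structure $(\LLP(\RLP(\mathbb L)), \phi, \RLP(\mathbb L))$ is an awfs. Since the excerpt explicitly says this is Proposition~18 from \cite{Bourke} and will be \emph{stated without proof}, the ``proof'' here should really just be a pointer: the content is entirely in \cite{Bourke}, and the paper is only recording it for later use. So the first thing I would do is simply cite \cite{Bourke} and note that the hypotheses match: $\mathcal C$ locally presentable gives the small-object argument the smallness and cocompleteness it needs, and $\mathbb L$ being small is what makes the generating data set-sized.

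\emph{If} one wanted to sketch the underlying argument rather than just cite it, the plan would be as follows. First, verify the \textbf{axiom of factorisation}. Given $f\colon A\to B$ in $\mathcal C$, one builds the factorisation $A \to C_f \to B$ by a transfinite small-object-argument construction: at each stage one glues on, for every vertical morphism $l$ of $\mathbb L$ and every square $\alpha\colon Ll \to (\text{current map})$, a copy of $l$ along $L\lambda$, taking a pushout; local presentability of $\mathcal C$ guarantees that this stabilises after a bounded ordinal number of steps, and that the resulting $l_f$ lands in $\LLP(\RLP(\mathbb L))$ while $r_f$ lands in $\RLP(\mathbb L)$. The bi-universality of this factorisation — the unique-square property in both variables — is exactly the statement that the factorisation is a \emph{free} algebra/coalgebra structure, which is the standard output of the algebraic small object argument (cf. Garner \cite{GarnerSmallObjects}); one checks that the universal property of the iterated pushout translates into the uniqueness of $\lambda\colon l\to l_f$ and, dually, of $\rho\colon r_f \to r$.

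Second, verify the \textbf{axiom of lifting}: that $\mathbb R \hookrightarrow \RLP(\mathbb L)$ and $\mathbb L \hookrightarrow \LLP(\RLP(\mathbb L))$ are invertible, with $\mathbb R = \RLP(\mathbb L)$ in our case by fiat. The substantive half is $\LLP(\RLP(\mathbb L)) \hookrightarrow \LLP(\RLP(\LLP(\RLP(\mathbb L))))$ being invertible, i.e. that a vertical morphism equipped with a compatible family of lifts against \emph{everything in $\RLP(\mathbb L)$} is the same as one equipped with lifts against the generating $\mathbb L$ together with the left compatibility conditions. This is where the factorisation feeds back in: given $l \in \LLP(\RLP(\mathbb L))$, one factors it as $R(r_{?})\circ L(l_{?})$... more precisely, one uses that $l$ admits a lifting structure against $r_l$ (its own generated right factor, which lies in $\RLP(\mathbb L)$), and a retraction argument shows $l$ is a retract of $L(l_l)$ in a way compatible with the lifting data, forcing the two double categories to coincide.

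\emph{The main obstacle}, were one to write this out in full, is the bookkeeping in the bi-universality claim — keeping track of the coalgebra structure maps through the transfinite construction and checking that the four compatibility conditions (left/right $\times$ horizontal/vertical) from Section~\ref{sec:LiftingAWFS} are precisely what the universal properties of the stagewise pushouts encode. But since Proposition~\ref{Prop:BourkeCofGenerated} is quoted verbatim from \cite{Bourke} and used only as a black box in the sequel, the honest course is to attribute it there and move on; I would not reproduce the proof.
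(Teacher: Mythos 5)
You correctly identify that the paper states this result without proof, citing Proposition 18 of Bourke, and your recommendation to do the same is exactly what the paper does. One small slip in your optional sketch: after fixing $\mathbb L' = \LLP(\RLP(\mathbb L))$ and $\mathbb R' = \RLP(\mathbb L)$, the trivial direction of the lifting axiom is $\mathbb L' \hookrightarrow \LLP(\mathbb R')$ (which is the identity), and the substantive one is $\mathbb R' \hookrightarrow \RLP(\mathbb L')$, i.e.\ $\RLP(\mathbb L) \hookrightarrow \RLP(\LLP(\RLP(\mathbb L)))$, not the map you wrote; but since you explicitly decline to reproduce the proof, this does not affect your conclusion.
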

Such lifting structures are called \textbf{cofibrantly generated}. 
It should be noted that the proof for the above proposition uses
a version of Quillen's small objects argument from \cite{GarnerSmallObjects}, 
where the axiom of choice was used. 
At the moment, it is not known whether a constructive proof of this result exists.

In the remainder of this section, we shall show that the effective and symmetric effective Kan fibrations
are vertical morphisms in cofibrantly generated lifting structures. 
\subsection{Degenerate-preferring Kan fibrations}\label{secDegPrefLawfs}
In this section, we will construct a subcategory of $\mathbb S \textbf q (\textbf{sSet})$
which cofibrantly generates an awfs where the vertical morphisms of the right double category are
degenerate-preferring Kan fibrations. 

We define the double category $\mathbb D$ as follows:
for our objects, we take the set of simplices $\Delta^n$ and horns $\Lambda^n_m$. 
For our vertical morphisms, we take identities on the objects and horn inclusions. 
The horizontal morphisms $\Delta^a \to \Delta^{b}$ are epimorphisms, 
the horizontal morphisms $\Lambda^n_m \to \Lambda^{n'}_{m'}$ are identities and 
the horizontal morphisms $\Lambda^n_m \to \Delta^{b}$ are of the form $s\circ \iota$, where
$\iota$ is a horn inclusion 
and $s: \Delta^n\to \Delta^{b}$ is a non-identity epimorphism. 
For $s,\iota$ as above, 
the squares are either identity squares on vertical or horizontal arrows or of the form:
\begin{equation}\label{eqnDsquares}\begin{tikzcd}
  \Lambda^n_m \arrow[r, "s\circ \iota"] \arrow[d,hook,"\iota"]& \Delta^{b} \arrow[d, "id"]\\
  \Delta^n \arrow[r, "s"] & \Delta^{b}
\end{tikzcd}\end{equation}
Note that these sets of arrows and squares are closed under any composition.

\begin{theorem}
  The vertical arrows of $\RLP(\mathbb D)$ are exactly the degenerate-preferring Kan fibrations. 
\end{theorem}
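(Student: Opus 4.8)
The plan is to unwind Definition \ref{dfnRLP} in the special case $\mathbb{L}=\mathbb{D}$ and to match the resulting data and axioms, term by term, with the notion of a degenerate-preferring Kan fibration; concretely, I would exhibit mutually inverse assignments between vertical arrows $(f,\psi)$ of $\RLP(\mathbb{D})$ and degenerate-preferring Kan fibrations $(f,\mathrm{lift})$, both being extra structure on one and the same morphism $f$ of $\mathbf{sSet}$. Throughout I would suppress the evident inclusion $\mathbb{D}\Rightarrow\Sq(\mathbf{sSet})$ and write $\iota$, $s\circ\iota$, etc., for its values on morphisms of $\mathbb{D}$.

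The first step is to read off what a vertical arrow $(f,\psi)$ of $\RLP(\mathbb{D})$ amounts to. The vertical morphisms of $\mathbb{D}$ are exactly the identities and the horn inclusions $\iota\colon\Lambda^n_m\hookrightarrow\Delta^n$, and no two non-identity vertical morphisms of $\mathbb{D}$ are composable; hence the left vertical compatibility condition of Equation \ref{eqnLeftVerticalCompatibility} is vacuous. Moreover, on an identity vertical morphism $\mathrm{id}_Z$ a diagonal filler of a square $[u,v]\colon\mathrm{id}_Z\to f$ is necessarily $u$, so $\psi$ carries no data there. Writing $\mathrm{lift}$ for the restriction of $\psi$ to horn inclusions, we are left with precisely a functional Kan fibration structure on $f$ in the sense of Definition \ref{dfnFuncKanFib}, subject only to the left horizontal compatibility condition of Equation \ref{eqnLeftHorizontalCompatibility} for each square of $\mathbb{D}$.

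The second step is to unwind that condition. Apart from identity squares, for which it holds trivially, the squares of $\mathbb{D}$ are exactly those of the form (\ref{eqnDsquares}): vertical domain a horn inclusion $\iota\colon\Lambda^n_m\hookrightarrow\Delta^n$, vertical codomain $\mathrm{id}_{\Delta^b}$, and horizontal legs $s\circ\iota$ and $s$ for a non-identity epimorphism $s\colon\Delta^n\twoheadrightarrow\Delta^b$. For such a square and a square $[u,v]\colon\mathrm{id}_{\Delta^b}\to f$ (so $u\colon\Delta^b\to X$, $v\colon\Delta^b\to Y$, $f\circ u=v$), Equation \ref{eqnLeftHorizontalCompatibility} reads
\[
   \mathrm{lift}(u\circ s\circ\iota,\ v\circ s)\ =\ u\circ s .
\]
So $(f,\psi)$ is a vertical arrow of $\RLP(\mathbb{D})$ exactly when $(f,\mathrm{lift})$ is a functional Kan fibration satisfying this identity for all such $s,u,v$.

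The final step is to see that this identity is equivalent to $(f,\mathrm{lift})$ being degenerate-preferring. For one direction, given a lifting problem $(x,y)$ against $\iota$ with a degenerate solution $z\circ s_j$, I would apply the identity with $s=s_j$ (a non-identity epimorphism, since horns only occur for $n\ge 1$) and $(u,v)=(z,f\circ z)$; using $z\circ s_j\circ\iota=x$ and $f\circ z\circ s_j=y$ this yields $\mathrm{lift}(x,y)=z\circ s_j$, so $\mathrm{lift}$ is degenerate-preferring. Conversely, given a degenerate-preferring $\mathrm{lift}$ and a non-identity epimorphism $s$, I would factor $s=g\circ s_j$ through a degeneracy $s_j\colon\Delta^n\to\Delta^{n-1}$; then $u\circ s=(u\circ g)\circ s_j$ is a degenerate solution of the lifting problem $(u\circ s\circ\iota,\ v\circ s)$, so $\mathrm{lift}$ returns a solution of that form, which by uniqueness of degenerate solutions (the lemma from Appendix C of \cite{BergFaber} recalled in Section \ref{sec:Definitions}) must be $u\circ s$. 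The two passages $(f,\psi)\mapsto(f,\mathrm{lift})$ and $(f,\mathrm{lift})\mapsto(f,\psi)$, the latter setting $\psi_\iota:=\mathrm{lift}$ and $\psi_{\mathrm{id}_Z}(u,v):=u$, are then patently mutually inverse. I expect the only real obstacle to be the careful translation between the double-category bookkeeping of Definition \ref{dfnRLP} and the hands-on lifting data — in particular checking that the conditions involving all epimorphisms $s$ reduce, via the factorisation $s=g\circ s_j$, to the single-degeneracy case — while the one genuinely nontrivial input is the uniqueness of degenerate solutions from \cite{BergFaber}.
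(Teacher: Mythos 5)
Your proof is correct and follows essentially the same route as the paper: unwind $\RLP(\mathbb D)$ to a functional Kan fibration subject to the left horizontal compatibility condition for the squares of Equation~\ref{eqnDsquares}, then show this condition is equivalent to being degenerate-preferring by factoring the non-identity epimorphism through a single degeneracy. The only noteworthy difference is presentational: you explicitly invoke the uniqueness-of-degenerate-solutions lemma recalled from Appendix~C of \cite{BergFaber}, whereas the paper's proof leaves that step implicit when it concludes $\text{lift}(z\circ s\circ\iota,\,f\circ s)=z\circ s$.
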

\begin{proof}
As having lifts against identities is trivial, having a choice of lifts against the vertical morphisms of $\mathbb D$ 
is equivalent to being a functional Kan fibration.

Now suppose that $(\alpha,\text{lift})$ is a degenerate-preferring Kan fibration. 
We must show that it satisfies the horizontal and vertical compatibility conditions against squares in $\mathbb D$. 

The left vertical compatibility conditions are trivially satisfied as 
whenever two vertical arrows can be composed, one of them is an identity. 
Satisfying the left horizontal compatibility conditions against identity squares is also trivial,
thus the only squares for which it is not trivial to satisfy the left horizontal compatibility condition are those 
as in Diagram \ref{eqn:dfnKanFib}. 

Suppose the following diagram commutes for $\iota$ a horn inclusion and $s$ a non-identity degeneracy map:
\begin{equation}\begin{tikzcd}
  \Lambda^n_m \arrow[r, "s\circ \iota"] 
  \arrow[d,hook,"\iota"]& 
  \Delta^{b} \arrow[d, "id"] \arrow[r,"z"] & X \arrow[d,"\alpha"]\\
  \Delta^n \arrow[r, "s"] 
  & \Delta^{b} \arrow[r,"f"] & Y
\end{tikzcd}\end{equation}
Then $s=s'\circ s_j$ for some degeneracy map $s_j$, hence $z\circ s$ is a degenerate solution for the lifting 
problem against $\iota$ and therefore $\text{lift}(z\circ s\circ \iota, f\circ s) = z\circ s$ as $\alpha$ is degenerate-preferring. Since 
$z$ must be the unique lift against the identity arrow, $(\alpha,\text{lift})$ satisfies 
the left horizontal compatibility condition against the above squares as in Diagram \ref{eqn:dfnKanFib} 
and is a vertical morphism of $\RLP(\mathbb D)$. 

Conversely, suppose that a degenerate solution $z\circ s_j$ to a lifting problem as in Diagram \ref{eqn:dfnKanFib} exists, 
then the square in Diagram \ref{eqn:dfnKanFib} factors as the above diagram for $s=s_j$. 
Note that the right square has $z$ as unique lift. 
  By the 
  horizontal compatibility condition, any 
  vertical morphism $(f,\psi)$ of $\RLP(\mathbb D)$ therefore assigns $z\circ s_j$ 
  as solution to Diagram \ref{eqn:dfnKanFib}. 
  Therefore any such a vertical morphism is a degenerate-preferring Kan fibration. 

We conclude that the vertical morphisms of $\RLP(\mathbb D)$ are exactly the degenerate-preferring Kan fibrations. 
\end{proof}
\subsection{The double category of horn pushout sequences}
In the following sections, we will prove some general lemmas in order to construct an awfs based on horn inclusions. 
To stay as general as possible, we will assume we are given a surjection 
$\ell:\mathcal L \to \Lambda$ onto the set of horn inclusions; in the cases of interest, we have $\mathcal L = \Lambda$ and $\ell = 1_\Lambda$, or $\mathcal L$ the set of signed horn inclusions and $\ell = \pm$ the forgetful map which forgets the sign.
In this section, we will build a double category $\mathbb L_\ell$ over $\mathbb S\textbf q(\textbf{sSet})$. 
We will later study right vertical maps of an awfs cofibrantly generated by 
wide subcategories of $\mathbb L_\ell$. 

The objects of $\mathbb L_\ell$ will be decidable sieves: 
\begin{definition}
  A \textbf{decidable sieve} of $\Delta^n$ is a sieve $S\subseteq \Delta^n$ 
  such that for any $0\leq m\leq n$ and 
  any $p:\Delta^m\to \Delta^n$, it is decidable whether or 
  not $p$ factors through $S$.
\end{definition}

The horizontal morphisms in $\mathbb L_\ell$ between 
decidable sieves $S\subseteq \Delta^a, T\subseteq \Delta^b$ are morphisms
$f:[a]\to[b]$ in $\Delta$ such that $f^*(T) = S$; in other words, they are maps $f:[a] \to [b]$ fitting into pullback squares
\begin{displaymath}
  \begin{tikzcd}
    S \ar[r] \ar[d, hook] & T \ar[d, hook] \\
    {[a]} \ar[r, "f"] & {[b]}
  \end{tikzcd}
\end{displaymath} 
Horizontal compositions and identities are then taken in $\Delta$. 

The vertical morphisms will be horn pushout sequences:
\begin{definition}\label{dfn:pushoutsequence}
  A \gls{horn pushout sequence} from $S_0\subseteq \Delta^a$ to $S_k\subseteq \Delta^a$ 
  is a finite sequence of composable monomorphisms of the form
        \begin{equation}\label{eqn:Sseq}\begin{tikzcd}
        S_0    \arrow[r,hook, "\sigma_1"] &
        S_1    \arrow[r,hook, "\sigma_2"] & 
        \cdots \arrow[r,hook, "\sigma_k"] &
        S_k\arrow[r,hook] & \Delta^a.
\end{tikzcd}\end{equation} 
with length $k\geq 0$, 
with for every $\sigma_i$ a chosen $\iota_i\in\mathcal L$ 
  and a square 
        \begin{equation}\label{eqn:chosenpushout}\begin{tikzcd}
          \Lambda^n_m \arrow[r,hook]\arrow[d,hook, "\ell(\iota_i)"'] 
\arrow[rd, draw=none, "\ulcorner"{pos=0.875,anchor=center}] 
\arrow[rd, draw=none, "\lrcorner"{pos=0.125,anchor=center}] 
&
S_{i-1} \arrow[d,hook,"\sigma_i"] \\
\Delta^n \arrow[r,hook] & S_i
\end{tikzcd}\end{equation}
which is both a pushout and a pullback.
Such sequences can be composed and the identity is given by the empty sequence. 
\end{definition}
\begin{remark}\label{rmkCount}
We remark that for each $a\in\mathbb N$, 
there are only a finite number of monomorphisms into  $[a]$. 
Each of these correspond to a non-degenerate simplex of $\Delta^a$. 
The non-degenerate simplices that factor through a decidable sieve $S\subseteq \Delta^a$ 
can be counted. For a mono $f: S\hookrightarrow T$, we say that $f$ adds a non-degenerate simplex if 
it factors through $T$ but not through $S$.

  If we have a pullback square $\alpha: f\to g$ in $\textbf{sSet}$ where all maps are monos, 
  it corresponds to an intersection, therefore any non-degenerate simplex added by $f$ is also added by $g$. 
  Thus $g$ adds more non-degenerate simplicices than $f$. 
  If $\alpha$ is in addition also a pushout square, it corresponds to a union, and $f$ and $g$ add
  exactly the same amount of non-degenerate simplices. 

  In particular, in 
  Diagram \ref{eqn:chosenpushout},
the sieve $S_{i}$ contains exactly two more non-degenerate simplices than $S_{i-1}$, namely, one non-degenerate $n$-simplex and one non-degenerate $n-1$-simplex.
  As a consequence, in 
  Diagram \ref{eqn:Sseq}, the sieve 
  $S_k$ has exactly $2k$ more non-degenerate simplices than $S_0$. 
\end{remark}
For the squares, our first attempt would be the following definition:
\begin{definition}\label{dfn:hornpushoutPullback}
        A \textbf{pullback square of horn pushout sequences} from a horn pushout 
        sequence as in Definition \ref{dfn:pushoutsequence} into a horn 
        pushout sequence of the form 
        \begin{equation}\label{eqn:Tseq}\begin{tikzcd}
                T_0    \arrow[r,hook, "\tau_1"] &
                T_1    \arrow[r,hook, "\tau_2"] & 
                \cdots \arrow[r,hook, "\tau_l"] &
                T_l\arrow[r,hook] & \Delta^b,
        \end{tikzcd}\end{equation} 
        consists of a morphism $f:[a]\to[b]$ in $\Delta$ 
        and a 
        nondecreasing function $\mu:\mathbb N_{\leq l}\to \mathbb N_{\leq k}$ 
        with $\mu(0)=0$ and $\mu(l)=k$ such that for each 
        $0\leq i \leq l$, we have that $f^*(T_i) = S_{\mu(i)}$.

For such squares, both horizontal domain and codomain correspond to $f$. 
Horizontal composition works by composing the sequences and the nondecreasing functions. 
Horizontal identities have vertical domain and codomain the empty horn pushout sequence and 
identity function on $\mathbb N_{\leq 0}$ as nondecreasing function. 

A pullback square from $S_0 \hookrightarrow S_k\hookrightarrow \Delta^a$ to $T_0\hookrightarrow T_l\hookrightarrow \Delta^b$ 
can be vertically composed with a square
$S'_0 \hookrightarrow S'_{k'}\hookrightarrow \Delta^{a'}$ to $T'_0\hookrightarrow T_{l'}\hookrightarrow \Delta^{b'}$ 
iff their underlying morphism $f$ matches and $S_k= S'_0, T_l=T'_0$, 
so for $\mu,\nu$ their respective nondecreasing functions
\begin{equation}(\mu+\nu) (i) = \begin{cases}
  \mu(i) \text { if } 0\leq i\leq l\\
  k + \nu(i-l) \text { if } l\leq i \leq l'+l
\end{cases} 
\end{equation}
is uniquely defined on $l$ and defines 
a function $\mu+\nu:\mathbb N_{\leq l+l'}\to \mathbb N_{\leq k+k'}$
satisfying the required properties to give a pullback square. 
This is how we define the vertical composition of pullback squares. 
The vertical identity is given by taking the identity for both $f$ and $\mu$. 
\end{definition}

However, such squares do not give us enough information for our purposes. 
We need something more fine-grained. We will explicitly decompose pullback squares as above
into smaller squares. 
\begin{definition}\label{dfn:hornpushoutMorphism}
  If the underlying morphism of a pullback square of horn pushout sequences is a face or degeneracy map, 
  we call the pullback square a \textbf{face square} or \textbf{degeneracy square} respectively. 
  A \textbf{square of horn pushout sequences} is a sequence of horizontally composable face or degeneracy squares. 
\end{definition}
The squares in $\mathbb L_\ell$ will be squares of horn pushout sequences. 
The domains and codomains of such squares are those of the pullback square of 
horn pushout sequences we get after horizontal composition. 
Both horizontal and vertical identities are empty sequences, 
and both horizontal and vertical compositions go by composing sequences, 
which satisfies all required associativity conditions. 

Finally, we define a forgetful double functor $L_\ell:\mathbb L_\ell \to \mathbb S\textbf q(\textbf{sSet})$. 
This means that 
\begin{itemize}
  \item The sieve $S\subseteq \Delta^a$ is sent to the simplicial set $S$. 
  \item A horizontal morphism corresponding to the pullback
          $f^*(T)=S$ is sent to the map $S\to T$. 
  \item A horn pushout sequence as in Definition \ref{dfn:pushoutsequence}
  is sent to the composition $S_0\hookrightarrow S_k$. 
\item A square of horn pushout sequences as in Definition \ref{dfn:hornpushoutMorphism} is 
    first horizontally composed to a pullback square of horn pushout sequences as in 
  Definition \ref{dfn:hornpushoutPullback} and then sent to the square
\begin{equation}\begin{tikzcd}
        S_0 \arrow[r]   \arrow[d,hook] 
        \arrow[dr, draw = none, "\lrcorner"{anchor = center, pos = 0.125}]
        &
        T_0  \arrow[d,hook]
        \\ 
        S_k \arrow[r] & T_l 
\end{tikzcd}\end{equation}
\end{itemize}

Now note that sieves in simplicial sets form a set, $\mathcal L$ is a set, and therefore $\mathbb L_\ell$ is small. 
Also as $\Delta$ is small, the presheaf category $\textbf{sSet}$ is locally presentable. 
We can thus apply Proposition \ref{Prop:BourkeCofGenerated}
on any wide subcategory of $\mathbb L_\ell$.

\subsection{Functional lifts respect face squares}
In this section, we will study $\RLP(\mathbb S)$ 
for specific wide subcategories $\mathbb S$ of $\mathbb L_\ell$. 
We say that a vertical morphism $(f,\psi)$ of $\RLP(\mathbb S)$ respects a square if it satisfies the left horizontal 
compatibility condition against that square. 
We will show that functional (signed) Kan fibrations correspond with vertical morphisms that respect face squares.

\begin{remark}\label{rmkUniqueOnLength1Seq}
  Note first that the left vertical compatibility conditions means that 
vertical morphisms of $\RLP(\mathbb S)$ are uniquely determined by their 
action on horn pushout sequences of length $1$. 
\end{remark}

Now we say $\mathbb S$ is \textbf{vertical decomposition closed} if whenever a square $\alpha$ in $\mathbb S$ 
can be written as vertical composition of two squares $\beta,\gamma$ in $\mathbb L_\ell$, 
both $\beta, \gamma$ are also squares in $\mathbb S$. 

For such $\mathbb S$, the squares which we respect are uniquely determined by the 
squares whose vertical codomain is a pushout sequence of length $1$. 

\begin{lemma}\label{lemSquaresWhoseTargetisLength1}
  Let $\mathbb S, \mathbb S'$ denote two wide subcategories of $\mathbb L_\ell$. 
  Assume $\mathbb S$ is vertical decomposition closed. 
  Let $(f,\psi)$ be a vertical morphism of $\RLP(\mathbb S')$ and 
  suppose it respects all squares in $\mathbb S$ whose vertical codomain is a pushout sequence of length $1$. 
  Then $(f,\psi)$ respects all squares in $\mathbb S$.
\end{lemma}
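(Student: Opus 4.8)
The plan is to peel an arbitrary square of $\mathbb{S}$ apart along its vertical codomain into squares whose vertical codomain has length $1$, and then to transport the left horizontal compatibility condition back up through vertical composition using the left vertical compatibility condition for $\psi$. The structural observation underlying everything is that, being a \emph{wide} subcategory of $\mathbb{L}_\ell$, the double category $\mathbb{S}'$ contains every horn pushout sequence and every composite of horn pushout sequences; hence the given vertical morphism $(f,\psi)$ of $\RLP(\mathbb{S}')$ satisfies \eqref{eqnLeftVerticalCompatibility} for \emph{every} composable pair of horn pushout sequences, not only for those occurring inside squares of $\mathbb{S}'$.

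The first step is a purely formal lemma: if $\alpha = \gamma \circ_v \beta$ is a vertical composite of squares in $\mathbb{L}_\ell$ and $(f,\psi)$ respects both $\beta$ and $\gamma$, then it respects $\alpha$. Writing $p,q$ for the vertical domain and codomain of $\beta$ and $q',r$ for those of $\gamma$, one has $d_v\alpha = q'\circ p$ and $c_v\alpha = r\circ q$ as horn pushout sequences, and the left horizontal compatibility identity demanded of $(f,\psi)$ against $\alpha$ is derived as follows: use \eqref{eqnLeftVerticalCompatibility} to break $\psi_{q'\circ p}$ into $\psi_{q'}$ applied to $\psi_{p}$; rewrite the inner $\psi_{p}$-term by the equation expressing that $(f,\psi)$ respects $\beta$ (this introduces a $\psi_{q}$); rewrite the outer $\psi_{q'}$-term by the equation expressing that $(f,\psi)$ respects $\gamma$ (this introduces a $\psi_{r}$); and collapse the resulting $\psi_{r}$ applied to $\psi_{q}$ back into $\psi_{r\circ q}$ by a second use of \eqref{eqnLeftVerticalCompatibility}. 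Every intermediate rewrite is forced by naturality of the sieve inclusions with respect to the underlying $\Delta$-morphism, so no choices are involved; the only care needed is in matching up the restricted maps.

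Next comes the decomposition. Given a square $\alpha$ of $\mathbb{S}$ with vertical codomain a horn pushout sequence $T_0 \hookrightarrow \cdots \hookrightarrow T_l$, split this codomain into its $l$ length-one steps and split the vertical domain $S_0 \hookrightarrow \cdots \hookrightarrow S_k$ into the blocks singled out by the nondecreasing function $\mu$ of Definition \ref{dfn:hornpushoutPullback} (so that $f^\ast(T_i)=S_{\mu(i)}$); slicing the underlying word of face and degeneracy squares accordingly exhibits $\alpha$ as a vertical composite $\alpha_l \circ_v \cdots \circ_v \alpha_1$ in $\mathbb{L}_\ell$ in which each $\alpha_i$ has vertical codomain of length $1$. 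Since $\mathbb{S}$ is vertical decomposition closed, peeling off one factor at a time shows that every $\alpha_i$ — and every partial composite — lies in $\mathbb{S}$, so by hypothesis $(f,\psi)$ respects each $\alpha_i$. An induction on $l$, applying the formal lemma above at each step (with $\beta = \alpha_{l-1}\circ_v\cdots\circ_v\alpha_1$ and $\gamma = \alpha_l$), then gives that $(f,\psi)$ respects $\alpha$; the base cases $l \le 1$ are immediate, $l=1$ being the hypothesis and $l=0$ the trivial case where vertical domain and codomain are both the empty sequence.

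The main obstacle I anticipate is the decomposition step rather than the propagation. What needs genuine attention is the verification that slicing a square of horn pushout sequences — a word of face and degeneracy squares — at a prescribed step of its vertical codomain really does yield squares of horn pushout sequences in $\mathbb{L}_\ell$, with the step-lengths and the associated nondecreasing functions tracked correctly, and that these reassemble vertically to the square one started with. This is exactly the kind of fine-grained combinatorial control over pullbacks of horn pushout sequences that the decomposition into face and degeneracy squares in Definition \ref{dfn:hornpushoutMorphism} was introduced to provide, with Remark \ref{rmkCount} (each horn-pushout step adds exactly one nondegenerate simplex in each of two dimensions) as the counting tool; it is routine but it is the point at which one must be attentive rather than purely formal.
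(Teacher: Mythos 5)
Your proof is correct and follows essentially the same strategy as the paper: induct on the length of the vertical codomain, peel the square into pieces with length-one codomain (which lie in $\mathbb S$ by vertical decomposition closedness and are respected by hypothesis), and recombine using the left vertical compatibility condition, which applies to all composable horn pushout sequences because $\mathbb S'$ is wide. The paper peels one length-one factor at a time rather than first fully decomposing and then invoking a separate closure-under-vertical-composition lemma, and it is likewise terse about the point you rightly flag — that vertical slicing of a word of face and degeneracy squares yields squares in $\mathbb L_\ell$ — but these are presentational differences, not mathematical ones.
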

\begin{proof}
  Let $(f,\psi)$ be a vertical morphism of $\RLP(\mathbb S')$ respecting squares 
  in $\mathbb S$ whose vertical codomain has length $1$. 
  Let $\lambda : \sigma \to \tau$ be a square in $\mathbb S$. 
  We will use induction on the length of $\tau$.
  If $\tau$ has length $0$, it is an identity, which is preserved under pullbacks, 
  and hence $\sigma$ must be an identity also.
  Therefore both have unique lifts. 

  Now suppose $(f,\psi)$ respects squares in $\mathbb S$ whose vertical codomain has length $\leq l$
  for $l\geq 1$. And let $\tau$ have length $l+1$. Then we can write 
  $\tau$ as vertical composition $\tau^1\circ \tau^l$ where $\tau^1$ has length $1$ and $\tau^l$ has length $l$. 
  As $\lambda$ is a pullback square of horn pushout sequences, 
  we can decompose it as the two left squares in the following diagram:
  \begin{equation}\begin{tikzcd}
	{S_0} & {T_0} & X \\
	{S_{\mu(l)}} & {T_l} \\
	{S_k} & {T_{l+1}} & Y
  \arrow[from=1-1, to=2-1, "\sigma^a"']
	\arrow[from=2-1, to=3-1, "\sigma^b"']
	\arrow["{\tau^l}", from=1-2, to=2-2]
	\arrow["{\tau^1}", from=2-2, to=3-2]
	\arrow[from=1-1, to=1-2]
	\arrow[from=3-1, to=3-2]
	\arrow[from=1-3, to=3-3,"f"]
	\arrow[from=3-2, to=3-3]
	\arrow[from=1-2, to=1-3]
	\arrow[from=2-1, to=2-2]
	\arrow["\lrcorner"{anchor=center, pos=0.125}, draw=none, from=2-1, to=3-2]
	\arrow["\lrcorner"{anchor=center, pos=0.125}, draw=none, from=1-1, to=2-2]
  \arrow[blue,dashed, from=2-2, to=1-3, bend right = 10]
	\arrow[red,dashed, from=2-1, to=1-3, bend right = 10]
	\arrow[blue , dotted, from=3-2, to=1-3, bend right = 20]
	\arrow[red, dotted, from=3-1, to=1-3, bend right = 20]
  \end{tikzcd}\end{equation}
  As $\mathbb S$ was vertical decomposition closed, both of the two left squares lie in $\mathbb S$. 
  By the induction hypothesis, both the triangles involving the dashed and dotted arrow commute. 
  By the left vertical compatibility condition, both the triangles
  involving the red and blue arrows commute. 
  Therefore the lifts $S_k \to X$ are equal, and $(f,\psi)$ respects $\lambda$. 
\end{proof}

\begin{remark}\label{lemFaceDegeneracySquaresAreEnough}
  Let $\mathbb S, \mathbb S'$ be two wide subcategories of $\mathbb L_\ell$, 
  with $\mathbb S$ vertical decomposition closed. 
  By the left horizontal compatibility conditions, 
  a vertical morphism $(f,\psi)$ of $\RLP(\mathbb S')$ 
  respects all squares in $\mathbb S$ as soon as 
  it respects all face and degeneracy squares in $\mathbb S$. 
\end{remark}
Note that for any $\iota\in \mathcal L$ with $\ell (\iota): \Lambda^n_m\hookrightarrow \Delta^n$, 
both $\Lambda^n_m$ and $\Delta^n$ are decidable sieves of $\Delta^n$ and therefore we have a horn pushout sequence of length $1$ 
corresponding to $[id,id]: \ell (\iota) \to \ell (\iota)$.
If $(f,\psi)$ is a vertical morphism in ${\bf RLP}(\mathbb S)$, we call its lift against 
this sequence the lift against $\iota$. 
\begin{lemma}\label{lemFaceSquares}
A vertical morphism $(f,\psi)$ of $\RLP(\mathbb S)$ 
  assigns to pushouts sequences of length $1$ with chosen $\iota\in\mathcal L$ 
  the pushout of the lift against $\iota$ iff 
  $(f,\psi)$ respects face squares.
\end{lemma}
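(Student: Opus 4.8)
The plan is to realise, for each length-$1$ horn pushout sequence $l$, a canonical composite of face squares from the ``bare'' sequence $\Lambda^n_m\hookrightarrow\Delta^n\hookrightarrow\Delta^n$ to $l$, and then to notice that respecting this composite is exactly the condition appearing in the statement. So I would first fix a length-$1$ horn pushout sequence $l$, say $S_0\hookrightarrow S_1\hookrightarrow\Delta^a$, with chosen $\iota\in\mathcal L$, $\ell(\iota)\colon\Lambda^n_m\hookrightarrow\Delta^n$, and chosen pushout--pullback square as in Diagram \ref{eqn:chosenpushout}; write $j\colon\Lambda^n_m\hookrightarrow S_0$ and $\bar\jmath\colon\Delta^n\hookrightarrow S_1$ for its legs, so that $S_1=S_0\cup_{\Lambda^n_m}\Delta^n$ and $\bar\jmath$ restricts to $j$ on $\Lambda^n_m$. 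Let $l_\iota$ denote the length-$1$ sequence $\Lambda^n_m\hookrightarrow\Delta^n\hookrightarrow\Delta^n$ corresponding to $[\mathrm{id},\mathrm{id}]\colon\ell(\iota)\to\ell(\iota)$; recall that, given a square $[u,v]\colon Ll\to f$, the lift against $\iota$ is $\psi_{l_\iota}(u\circ j,\,v\circ\bar\jmath)\colon\Delta^n\to X$ and the pushout of the lift against $\iota$ is the map $S_1\to X$ obtained by gluing it to $u\colon S_0\to X$ along $\Lambda^n_m$. Now $\bar\jmath$ is a monomorphism out of a representable, hence the image of an injective order-preserving map $[n]\hookrightarrow[a]$, i.e.\ a composite $d_{i_1}\cdots d_{i_r}$ of face maps; pulling the inclusion $S_0\hookrightarrow S_1$ back one face map at a time produces a composable string of face squares from $l_\iota$ to $l$. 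Here the intermediate sieves are decidable because pullbacks of decidable sieves along maps of $\Delta$ are decidable, and by the simplex count of Remark \ref{rmkCount} each intermediate step still adds exactly one non-degenerate $n$-simplex and one non-degenerate $(n-1)$-simplex, so it is a genuine length-$1$ pushout step carrying the same $\iota$. Call the resulting composite face square $\Phi_l$.

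The point is then that $(f,\psi)$ respects $\Phi_l$ if and only if $\psi_l(u,v)$ is the pushout of the lift against $\iota$ for every square $[u,v]\colon Ll\to f$. Indeed, the left horizontal compatibility condition along $\Phi_l$ reads $\psi_{l_\iota}(u\circ j,\,v\circ\bar\jmath)=\psi_l(u,v)\circ\bar\jmath$; since $\psi_l(u,v)$ always restricts to $u$ along $S_0\hookrightarrow S_1$ and $S_1=S_0\cup_{\Lambda^n_m}\Delta^n$, this is precisely the statement that $\psi_l(u,v)$ is glued from $u$ and the lift against $\iota$.

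For the first implication, if $(f,\psi)$ respects all face squares then, because the left horizontal compatibility condition is stable under horizontal composition of squares, it respects every composite of face squares, in particular every $\Phi_l$, and the equivalence above finishes this direction. For the converse, I would assume $\psi$ assigns the pushout of the lift against $\iota$ to every length-$1$ pushout sequence, and take a face square $\lambda\colon\sigma\to\tau$ with underlying face map $d_i$. Writing $\tau$ as the vertical composite of its length-$1$ steps and gluing via the left vertical compatibility condition, exactly as in the proof of Lemma \ref{lemSquaresWhoseTargetisLength1}, it suffices to show that $(f,\psi)$ respects each face square $\lambda_j$ obtained by pulling a single length-$1$ step $\tau_j=(T_{j-1}\hookrightarrow T_j)$ of $\tau$ back along $d_i$. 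Since pulling back along a monomorphism is intersection, and so adds no new non-degenerate simplices (Remark \ref{rmkCount}), whereas $d_i^*(\tau_j)$ must itself be a horn pushout sequence, $d_i^*(\tau_j)$ has length $0$ or $1$. If it has length $0$ it is an identity and $\lambda_j$ is respected trivially. If it has length $1$, a short inspection of the faces of the new $n$-simplex of $T_j$ shows that $i$ lies outside its image, so that simplex factors as $d_i$ composed with the new $n$-simplex of $d_i^*(\tau_j)$, and these two have the same missing face $m$; applying the hypothesis to $\tau_j$ and to $d_i^*(\tau_j)$ (whose chosen elements of $\mathcal L$ agree, one being the pullback of the other), one checks that both sides of the left horizontal compatibility equation for $\lambda_j$ restrict to the same map over the lower sieve and over $\Delta^n$, and hence coincide; so $\lambda_j$, and therefore $\lambda$, is respected.

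The step I expect to be the main obstacle is the combinatorial bookkeeping in the converse direction: verifying that pulling a length-$1$ pushout step back along a face map again produces a horn pushout sequence (and not, for instance, a sieve inclusion adding a single simplex, which is not of this form), pinning down the new simplex and its missing face in the length-$1$ case, and likewise confirming that each intermediate sieve appearing in $\Phi_l$ is a bona fide length-$1$ pushout step. One also has to track the chosen element of $\mathcal L$ along pullbacks, so that ``the lift against $\iota$'' is literally the same map on both sides of each compatibility equation.
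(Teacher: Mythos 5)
Your proof is correct and takes essentially the same approach as the paper's: for the ``respects face squares $\Rightarrow$ pushout'' direction, both you and the paper factor $\Delta^n\hookrightarrow\Delta^a$ as a composite of face maps and string together face squares relating the length-$1$ sequence to the bare sequence $\ell(\iota)$; for the converse, both reduce via Lemma~\ref{lemSquaresWhoseTargetisLength1} to face squares with length-$1$ codomain and use the simplex count of Remark~\ref{rmkCount} to see the pulled-back sequence has length $0$ or $1$, then conclude by uniqueness of pushouts. Your write-up is slightly more explicit about the intermediate sieves being decidable and about tracking the chosen element of $\mathcal L$ along pullbacks --- points the paper's proof leaves tacit --- but the argument is the same.
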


\begin{proof}
\item 
\begin{itemize}
\item
  Suppose that $\psi$ works by pushing out the lift against $\iota$. 
  By Lemma \ref{lemSquaresWhoseTargetisLength1}, we need to show that 
  $(f,\psi)$ respects face squares into a sequence of length 1. 

  So let $\delta:\sigma \to \tau$ be a face square and let $\tau$ have length $1$ with chosen $\iota\in\mathcal L$. As $\delta$ is a face square, the square $L(\sigma)\to \tau_1$ is a pullback square in ${\bf sSet}$ in which all arrows are mono. Therefore, by Remark \ref{rmkCount},  $L(\sigma)$ adds at most 2 faces, thus $\sigma$ has either length 0 or 1. 
  If the length is $0$, there is a unique lift against $\sigma$ and we are done. 

  Suppose the length is $1$. As the pullback along a mono corresponds to taking intersection,
  we are in the situation that $\sigma_1$ and $\tau_1$ add the same two simplices, and the solid part of the 
  following diagram commutes:
  \begin{equation}\label{eqnNoChangeSquare}\begin{tikzcd}
    \Lambda^n_m \arrow[d,hook,"\ell(\iota)"] \arrow[r,hook] & 
    S_0 \arrow[d,"\sigma_1"'{name=s1},hook] \arrow[r,hook] & 
    T_0 \arrow[d,"\tau_1"{name=t1},hook] \arrow[r]& X\arrow[d,"f"]
    \\
    \Delta^n \arrow[r,hook]& 
    S_1 \arrow[r,hook] \arrow[rru,dashed,bend right= 10] & 
    T_1 \arrow[ru,dashed,bend right= 10]  \arrow[r]& Y
    \arrow[from=s1, to=t1, phantom, "L(\delta)"]
  \end{tikzcd}\end{equation}
  By assumption, now both the lift against $\tau_1$ and $\sigma_1$ are defined by pushing out the lift against $\iota$. 
  By the uniqueness property of the pushout, the two lifts $S_1\to X$ must match and we respect $\delta$. 
\item For the converse, assume that $(f,\psi)$ respects face squares. Consider a pushout sequence $\tau$ of length $1$ 
as in 
    the following diagram: 
    \begin{equation}\label{eqnPushoutSeqlength1}\begin{tikzcd}
  \Lambda^n_m \arrow[r,hook] \arrow[d,hook,"\ell(\iota)"']
  \arrow[rd, draw=none, "\ulcorner"{pos=0.875,anchor=center}] 
  \arrow[rd, draw=none, "\lrcorner"{pos=0.125,anchor=center}] 
  & T_0 \arrow[d, hook , "\tau_1"] \\ 
  \Delta^n \arrow[r,hook] 
      & T_1 \arrow[d,hook] \\
      & \Delta^b
\end{tikzcd}\end{equation}
The map $\Delta^n\to \Delta^b$ can be seen as composition of face maps
    by repeatedly taking faces which still contain the added simplices. 
Whenever we take such a face, we get a face square between pushout sequences of length 1, which 
we compose until we arrive at a horn pushout sequence with codomain $\Delta^n\hookrightarrow \Delta^n$. 
    Because $(f,\psi)$ respects face squares, 
    the chosen lift against $\tau$ composed with the map $\Delta^n \to T_1$ will be the lift against $\iota$. 
By the universal property of the pushout, 
the chosen lift against $\tau$ must therefore be the pushout of the lift against $\iota$.
\end{itemize}\end{proof}
Now let $1_\Lambda$ be the identity surjection on horn inclusions and 
    $\pm$ be the forgetful surjection from signed horn inclusions to horn inclusions. 
    Let $\mathbb F_\Lambda, \mathbb F_\pm$ be the wide subcategories of $\mathbb L_{1_\Lambda}, \mathbb L_\pm$ 
    respectively containing only compositions of face squares. 
  \begin{corollary}\label{corFunctionalLAWFS}
    The vertical morphisms of $\RLP(\mathbb F_{1_\Lambda})$ and $\RLP(\mathbb F_\pm)$ are 
    in 1-1 correspondence with functional Kan fibrations and functional signed Kan fibrations respectively. 
  \end{corollary}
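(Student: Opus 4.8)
The plan is to reduce the statement to Lemma \ref{lemFaceSquares} together with the observation that, by construction, the squares of $\mathbb F_{1_\Lambda}$ and $\mathbb F_\pm$ are exactly the horizontal composites of face squares. First I would unwind Definition \ref{dfnRLP}: for $\ell$ either $1_\Lambda$ or $\pm$, a vertical morphism of $\RLP(\mathbb F_\ell)$ is a pair $(f,\psi)$ where $f\colon X\to Y$ is a map of simplicial sets and $\psi$ assigns to each horn pushout sequence $l$ and each square $[u,v]\colon L_\ell(l)\to f$ a diagonal filler $\psi_l(u,v)$, subject to the left horizontal compatibility condition against every square of $\mathbb F_\ell$ and the left vertical compatibility condition against every composable pair of horn pushout sequences. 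I would then record that $\mathbb F_\ell$ is vertical decomposition closed: a face square has a single coface as its underlying morphism, which is unchanged under vertical decomposition, and a horizontal composite of face squares decomposes vertically into horizontal composites of face squares because taking preimages of decidable sieves is functorial along the intermediate cofaces. This is what allows us to invoke Remark \ref{lemFaceDegeneracySquaresAreEnough} and Lemma \ref{lemFaceSquares} for $\mathbb S=\mathbb F_\ell$.

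Next I would argue that $(f,\psi)$ is a vertical morphism of $\RLP(\mathbb F_\ell)$ if and only if $\psi$ respects all face squares and satisfies left vertical compatibility. One direction is trivial since face squares are among the squares of $\mathbb F_\ell$; the other direction is Remark \ref{lemFaceDegeneracySquaresAreEnough}, noting that $\mathbb F_\ell$ contains no degeneracy squares, so that respecting all face squares already forces left horizontal compatibility against every square of $\mathbb F_\ell$. By Lemma \ref{lemFaceSquares}, $\psi$ respects all face squares if and only if, on every horn pushout sequence of length $1$ with chosen $\iota\in\mathcal L$, the value of $\psi$ is the pushout of the lift against $\iota$, i.e.\ of the value of $\psi$ on the canonical length-$1$ sequence $[\mathrm{id},\mathrm{id}]\colon\ell(\iota)\to\ell(\iota)$.

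I would then invoke Remark \ref{rmkUniqueOnLength1Seq}: left vertical compatibility forces $\psi$ to be determined by its values on horn pushout sequences of length $1$. Combining this with the previous paragraph, $\psi$ is completely determined by the family $\big(\text{lift against }\iota\big)_{\iota\in\mathcal L}$; and conversely, any such $\mathcal L$-indexed family of lifts extends to a unique valid $\psi$, obtained by taking, on a length-$1$ sequence with chosen $\iota$, the pushout of the given $\iota$-lift along the chosen pushout square, and, on a general horn pushout sequence, by iterating along its chosen decomposition into length-$1$ pieces (which is part of the data of the sequence by Definition \ref{dfn:pushoutsequence}). This $\psi$ satisfies left vertical compatibility by construction, and it respects face squares, hence all of $\mathbb F_\ell$, by Lemma \ref{lemFaceSquares}. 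Thus the vertical morphisms of $\RLP(\mathbb F_\ell)$ are in bijection with the pairs consisting of a map $f\colon X\to Y$ and a choice of lift against each $\iota\in\mathcal L$.

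Finally I would match the data. A lift against $\iota$ with $\ell(\iota)\colon\Lambda^n_m\hookrightarrow\Delta^n$ is precisely a function sending each commuting square $(x\colon\Lambda^n_m\to X,\ y\colon\Delta^n\to Y)$ as in Diagram \ref{eqn:dfnKanFib} to a filler $\Delta^n\to X$. For $\mathcal L=\Lambda$ and $\ell=1_\Lambda$, a choice of such lifts for every horn inclusion is exactly a functional Kan fibration structure on $f$ in the sense of Definition \ref{dfnFuncKanFib}; for $\mathcal L$ the set of signed horn inclusions and $\ell=\pm$, a choice of such lifts for every signed horn inclusion is exactly a functional signed Kan fibration structure $(\mathrm{lift}_+,\mathrm{lift}_-)$, the outer horns contributing a single lift recorded under the forced sign. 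Transporting these identifications along the bijection of the previous paragraph yields the two claimed $1$-$1$ correspondences. I expect the only genuinely delicate points to be the well-definedness of the extension of an $\mathcal L$-indexed family of lifts to a full lifting operation $\psi$ — that iterating along the chosen decomposition is consistent and really yields something satisfying left vertical compatibility — and the bookkeeping that identifies the signed case precisely with the two-function data of a functional signed Kan fibration; everything else is a direct application of the lemmas already established.
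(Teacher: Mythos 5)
Your proposal is correct and follows essentially the same route as the paper: determine vertical morphisms by their action on length-$1$ horn pushout sequences via Remark \ref{rmkUniqueOnLength1Seq}, then characterise that action via Lemma \ref{lemFaceSquares} as a choice of lift against each (signed) horn inclusion, which is precisely the data of a functional (signed) Kan fibration. You are more careful than the paper's terse two-sentence argument — you explicitly check that $\mathbb F_\ell$ is vertical decomposition closed (a hypothesis of Lemma \ref{lemSquaresWhoseTargetisLength1} that the paper invokes but leaves unverified) and you spell out the reconstruction direction showing that an $\mathcal L$-indexed family of lifts extends coherently to a vertical morphism — but the underlying argument is the same.
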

  \begin{proof}
    By Remark \ref{rmkUniqueOnLength1Seq}, the vertical morphisms are uniquely defined by
    defining them on pushout sequences of length 1. By Lemma \ref{lemFaceSquares}, 
    this is uniquely done by defining the chosen lifts against (signed) horn inclusions,
    which is exactly the information the functional (signed) Kan fibrations carry. 
  \end{proof}

\subsection{Effective lifts respect degeneracy squares}
In this section, we will study what happens if we respect not only face squares, 
but also degeneracy squares. 
By Lemma \ref{lemSquaresWhoseTargetisLength1}, we only need to consider degeneracy squares 
$\delta:\sigma \to \tau$ with $\tau$ of length $1$ as in Diagram \ref{eqnPushoutSeqlength1}.
Let $s_j: \Delta^{b+1}\to \Delta^b$ be the underlying simplicial morphism of $\delta$.
It is decidable whether the index $j:\Delta^0\to \Delta^b$ factors 
through the inclusion $\Delta^n\hookrightarrow \Delta^b$. 
\begin{itemize}
  \item If $j$ does not factor through $\Delta^n$, then the pullback in Diagram \ref{eqnPushoutSeqlength1} restricts to the identity on both $\Lambda^n_m$ and $\Delta^n$ and, 
    therefore, $\Lambda^n_m$ factors through $s_j^*(T_0)$ and $\Delta^n$ factors through $s_j^*(T_1)$. 
    Therefore, for $(f,\psi)$ a vertical morphism of ${\bf RLP}(\mathbb S)$, we 
    arrive in the same situation as in Diagram \ref{eqnNoChangeSquare}, where the vertical domain
    of $\delta$ has length 1. 
    By Lemma \ref{lemFaceSquares},
    both maps $s_j^*(T_1)\to X, T_1\to X$ are defined by a pushout, 
    and by the universal property of pushouts, 
    both maps $s_j^*(T_1)\to X$ are equal, and we respect $\delta$. 
  \item If $j$ factors through $\Delta^n$, we have that $s_j^*(\Delta^n) = \Delta^{n+1}$,
    and $s_j$ restricts to $s_{j'}$ on $\Delta^{n+1}$ for some $j'$. 
    Now we have studied how the inclusion $\Lambda^n_m\hookrightarrow \Delta^n$ 
    behaves under pulling back along $s_{j'}$ in Section \ref{sec:SymEffKanFib}. 
    From that discussion, we can conclude that 
    the horn pushout sequence $\sigma$ must have one of the following forms: 
    \begin{itemize}
      \item If $j'= m$, then $\sigma$ has length 2. The first inclusion $\sigma_1$ is the pushout of a horn inclusion
        $\Lambda^n_m\hookrightarrow \Delta^n$ corresponding to either face $d_{j'}$ or $d_{j'+1}$ of 
        $\Delta^{n+1}$, while the second inclusion
        $\sigma_2$ is the pushout of the horn inclusion $\Lambda^{n+1}_{m^*}\hookrightarrow \Delta^{n+1}$
        for $m^*$ the face we did not add in $\sigma_1$. 
      \item If $j'\neq m$, then $\sigma$ has length 3. In this case
        $\sigma_1$ and $\sigma_2$ are both  the pushout of a horn
        inclusion $\Lambda^{n}_m\hookrightarrow \Delta^n$, 
        corresponding to either face $d_{j'}$ or $d_{j'+1}$ of 
        $\Delta^{n+1}$, while
        $\sigma_3$ is the pushout of the horn inclusion $\Lambda^{n+1}_{m^*}\hookrightarrow \Delta^{n+1}$
        for $m^*=m$ if $m<j'$ and $m^*=m+1$ if $m>j'$. 
    \end{itemize}  
    For each $\sigma_i$ we need a chosen horn inclusion $\iota'\in\mathcal L$ such that 
    $\sigma_i$ is the pushout of $\ell(\iota')$.
    The chosen horn inclusion for the final $\sigma_i$ 
    (in the cases above $\sigma_2$ and $\sigma_3$ respectively)
    is 
    denoted $\iota^*_\delta$, for $\iota$ as in Diagram \ref{eqnPushoutSeqlength1}.
    Note that if we respect face squares, 
    the lifts corresponding to the faces $d_{j'}, d_{j'+1}$ of $\Delta^{n+1}$, 
    will be the same as the lift against $\iota$.
\end{itemize}
We conclude that any vertical morphism $(f,\psi)$ of $\RLP(\mathbb L_\ell)$ that respects face squares 
respects the degeneracy square $\delta$ iff the lift against $\iota^*_\delta$, if it exists, 
is compatible with the lift against $\iota$, in the sense that 
for any square $\tau \to f$ restricting to $[x,y]$ on $\iota$, for any $s_{j'}$ as above and for 
\begin{equation}
  s_{j'}^*(x) = \psi_\iota(x,y) \circ s_j \circ \ell(\iota^*_\delta)
\end{equation}
(following the formulation from Equation \ref{eqnStormsForm}), we have 
\begin{equation}\label{eqnlawfscompcond}
  \psi_{\iota^*_\delta}(s_{j'}^*(x) , y \circ s_{j'}) = \psi_\iota(x,y)\circ s_{j'}
\end{equation}

Now we can define $\mathbb D_\pm$ to be the wide subcategory of $\mathbb L_\pm$ 
containing all face squares and those degeneracy squares $\delta$ 
such that whenever $\iota^*_{\delta}$ exists, it has the same sign as $\iota$. 
Note that $\iota^*_\delta$ must uniquely be the horn inclusion $\iota^*: \Lambda^{n+1}_{m^*}\hookrightarrow \Delta^{n+1}$ 
if $\ell$ is the identity on $\Lambda$. 

Now comparing the conditions in Equations 
\ref{eqndfnsymeffKancom},  \ref{eqndfnEffKanFib} and  \ref{eqnlawfscompcond},
we conclude the following:
\begin{theorem}
The vertical morphisms of $\RLP( \mathbb L_{1_\Lambda})$ and $\RLP (\mathbb D_\pm)$ are 
exactly the symmetric effective and the effective Kan fibrations respectively. 
\end{theorem}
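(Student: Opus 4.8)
The plan is to assemble the machinery already set up in this section. Throughout I treat the two cases in parallel, writing $(\mathbb{S},\ell)$ for $(\mathbb{L}_{1_\Lambda},1_\Lambda)$ or for $(\mathbb{D}_\pm,\pm)$. Since $\mathbb{F}_{1_\Lambda}\subseteq\mathbb{L}_{1_\Lambda}$ and $\mathbb{F}_\pm\subseteq\mathbb{D}_\pm$ as wide subcategories, any vertical morphism $(f,\psi)$ of $\RLP(\mathbb{S})$ is in particular a vertical morphism of $\RLP(\mathbb{F}_{1_\Lambda})$, resp. $\RLP(\mathbb{F}_\pm)$; hence by Corollary \ref{corFunctionalLAWFS} its underlying data is a functional Kan fibration, resp. a functional signed Kan fibration, with $\psi$ reconstructed from the chosen lifts against the (signed) horn inclusions by pushing out (Remark \ref{rmkUniqueOnLength1Seq}, Lemma \ref{lemFaceSquares}). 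Conversely, every such functional (signed) Kan fibration arises from a unique vertical morphism of $\RLP(\mathbb{F}_{1_\Lambda})$, resp. $\RLP(\mathbb{F}_\pm)$. So the statement reduces to identifying, among functional (signed) Kan fibrations, those whose induced $(f,\psi)$ respects \emph{every} square of $\mathbb{S}$, not merely the face squares.

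Next I would use Lemma \ref{lemSquaresWhoseTargetisLength1} together with Remark \ref{lemFaceDegeneracySquaresAreEnough}, both of which need $\mathbb{S}$ to be vertical decomposition closed (a routine check for $\mathbb{L}_{1_\Lambda}$ and $\mathbb{D}_\pm$), to reduce ``respects every square of $\mathbb{S}$'' to ``respects every degeneracy square of $\mathbb{S}$ whose vertical codomain is a pushout sequence of length $1$'', since the face squares are already respected. For such a degeneracy square $\delta$, with underlying degeneracy $s_j\colon\Delta^{b+1}\to\Delta^b$ and codomain as in Diagram \ref{eqnPushoutSeqlength1}, the case analysis immediately preceding the theorem shows: if the index $j$ does not factor through $\Delta^n$, then $\delta$ is respected automatically by the universal property of pushouts; and if $j$ factors through $\Delta^n$, so $s_j$ restricts to $s_{j'}$ on $\Delta^{n+1}$, then $\delta$ is respected exactly when the compatibility identity \ref{eqnlawfscompcond} holds for the chosen final horn inclusion $\iota^*_\delta$, with $s_{j'}^*(x)$ taken as in Equation \ref{eqnStormsForm}. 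Thus $(f,\psi)$ is a vertical morphism of $\RLP(\mathbb{S})$ iff its underlying functional (signed) Kan fibration satisfies \ref{eqnlawfscompcond} for every degeneracy square $\delta\in\mathbb{S}$.

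It then remains to compare these families of instances of \ref{eqnlawfscompcond} with Equations \ref{eqndfnsymeffKancom} and \ref{eqndfnEffKanFib}. For $\mathbb{S}=\mathbb{L}_{1_\Lambda}$ we have $\mathcal{L}=\Lambda$, all degeneracy squares are present, and $\iota^*_\delta$ is forced to be the horn inclusion $\Lambda^{n+1}_{m^*}\hookrightarrow\Delta^{n+1}$; letting $\delta$ vary realises $m^*$ over the whole set of Equation \ref{eqn:possibleNewMissingFaces}, the two possible domain sequences in the case $j'=m$ producing the values $m^*=m$ and $m^*=m+1$, so the collected conditions are exactly \ref{eqndfnsymeffKancom}. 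For $\mathbb{S}=\mathbb{D}_\pm$ there are two lifting functions $\text{lift}_+,\text{lift}_-$, and $\mathbb{D}_\pm$ contains precisely the degeneracy squares for which $\iota^*_\delta$ carries the same sign as $\iota$, so the collected conditions are precisely \ref{eqndfnEffKanFib}, one sign at a time. All steps being reversible, the vertical morphisms of $\RLP(\mathbb{L}_{1_\Lambda})$ and $\RLP(\mathbb{D}_\pm)$ are exactly the symmetric effective and the effective Kan fibrations.

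The conceptual content here is light; I expect the real work to be the bookkeeping around signs in the $\mathbb{D}_\pm$ case — checking that when $\iota$ is an outer signed horn inclusion (whose sign is forced) one can always choose a domain sequence whose final chosen horn inclusion $\iota^*_\delta$ is again outer with the same forced sign, so that $\mathbb{D}_\pm$ really does yield every instance of \ref{eqndfnEffKanFib}, and dually that $\mathbb{D}_\pm$ contains no degeneracy square that would impose a relation between $\text{lift}_+$ and $\text{lift}_-$. This comes down to a careful comparison of the sign convention for signed horn inclusions with the assignment $(j^*,m^*)$ of Equation \ref{eqnm*} and with the freedom available in the $j'=m$ case, all of which is governed by the analysis of $s_{j'}^*(\Lambda^n_m)$ inside $\Lambda^{n+1}_{m^*}$ carried out in Section \ref{sec:SymEffKanFib}.
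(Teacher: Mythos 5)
Your proposal is correct and follows the same route as the paper: the paper's proof is essentially the discussion immediately preceding the theorem (the characterisation of vertical morphisms via Corollary \ref{corFunctionalLAWFS}, the reduction to face and degeneracy squares of length~$1$ via Lemma \ref{lemSquaresWhoseTargetisLength1} and Remark \ref{lemFaceDegeneracySquaresAreEnough}, and the case analysis culminating in Equation \ref{eqnlawfscompcond}), followed by a direct comparison with Equations \ref{eqndfnsymeffKancom} and \ref{eqndfnEffKanFib}. The sign bookkeeping you flag at the end is indeed the only place where one must check something, but it resolves easily: whenever $m=0$ or $m=n$ (so the sign of $\iota$ is forced) the formulas for $m^*$ can only produce $m^*=0$ or $m^*=n+1$ respectively, so the forced sign of $\iota^*_\delta$ always agrees with that of $\iota$, and for inner $\iota^*_\delta$ the sign is free to be chosen to match; hence $\mathbb{D}_\pm$ realises every instance of Equation \ref{eqndfnEffKanFib} and no cross-sign constraint.
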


\section{Outlook}
In this final section, we will state some questions we have not addressed in this paper. 

In Section \ref{sec:Definitions}, we have introduced degenerate-preferring and 
symmetric effective Kan fibrations, 
both of which can be given the structure of effective Kan fibrations. 
In \cite{BergFaber}, the effective Kan fibrations 
are constructively shown to be closed under pushforwards. 
This is a first step in a constructive model for HoTT based on simplicial sets. 
It remains an open question whether these new notions of Kan fibrations 
are themselves also closed under pushforwards, and can form the basis for another HoTT model. 

In Section \ref{sec:Malcev}, we have extended the result that all
simplicial algebras of an algebraic theory
are Kan fibrations iff the algebraic theory is Malcev to
our other notions of Kan complexes. As we mentioned in Remark \ref{rmkMalcevCat}, 
there might be an extension of this result to Malcev categories. 
Another direction for generalization, 
lies in the Malcev structure in cubical sets with connections. 
In \cite{Tonks}, it has been shown that cubical groups with connections are Kan. 
It is an open question whether this also holds for cubical Malcev algebras with connections, 
and whether all algebraic theories whose cubical algebras with connections are Kan are Malcev algebras. 

In Section \ref{sec:LAWFS}, we have presented proofs showing that our new notions fit into lifting algebraic 
weak factorisation systems. We have used Proposition \ref{Prop:BourkeCofGenerated}, 
for which it is not yet known whether a constructive proof exists. 

In general, we do not have a good understanding of the left double category of a cofibrantly generated awfs, 
in particular, for the awfs corresponding to (symmetric) effective Kan fibrations. 
One question is whether any vertical morphism in this left double category which corresponds to an inclusion of sieves, 
is a horn pushout sequence. 
Another question in this direction is whether every pullback square of horn pushout sequences 
can be decomposed into face and degeneracy squares.

\bibliography{References}
\bibliographystyle{alpha}

\end{document}